\newtheorem{theorem}{Theorem}[section]
\newtheorem{proposition}[theorem]{Proposition}
\newtheorem{lemma}[theorem]{Lemma}
\newtheorem{corollary}[theorem]{Corollary}
\theoremstyle{definition}
\newtheorem{definition}[theorem]{Definition}
\newtheorem{example}[theorem]{Example}
\newcommand{\clm}{\mathcal{M}}
\newcommand{\cln}{\mathcal{N}}
\newcommand{\clh}{\mathcal{H}}
\newcommand{\cld}{\mathcal{D}}
\newcommand{\clb}{\mathcal{B}}
\newcommand{\clu}{\mathcal{U}}
\newcommand{\cly}{\mathcal{Y}}
\newcommand{\norm}[1]{\left\lVert#1\right\rVert}
\theoremstyle{remark}
\newtheorem{remark}[theorem]{Remark}
\numberwithin{equation}{section}
\title{Characteristic functions and Colligations}
\author{Neeru Bala, Santanu Dey, Kalpesh J. Haria and M. N. Reshmi }
\begin{document}
	\begin{abstract}
		The characteristic function of row contractions and characteristic function of liftings of row contractions are multi-analytic operators which are complete invariants up to unitary equivalence for row contractions and liftings of row contractions, respectively. We provide alternate proofs for these properties of characteristic functions using colligations. Co-isometric observable colligations with certain class of basic operators are characterized.  Blaschke factor based transformations of the characteristic function of lifting are studied.
	\end{abstract}
	\maketitle

	\vspace{0.5cm}
	\noindent {\bf MSC:2020}   47A20, 47A13, 47A15, 47A68, 47A48\\
	\noindent {\bf keywords:} Row contractions, contractive lifting, Fock space, characteristic functions, multi-analytic operators, completely non-co-isometric, minimal contractive lifting, colligations

	\vspace{0.5cm}

	\section{Introduction}
	One of the well studied classes of operators on Hilbert spaces is the class of normal operators and the spectral theorem for normal operators plays a vital role in its prominence. For a bounded linear operator $T$ on a Hilbert space, the resolvent function is $(T-\lambda I)^{-1}$ (for complex $\lambda$ where the operator $(T-\lambda I)^{-1}$ is defined). The spectrum of an operator and the spectral theorem  is directly related to the resolvent function, which is an operator valued analytic function.  The resolvent function is very important in the study of normal operators, but it provides relatively less information about the structure of non-normal operators. A bounded operator $T$ on a Hilbert space is called a {\it contraction} if $\|T\| \leq 1.$ Nagy and Foias (cf. \cite{NAGY}) associated an operator valued analytic function to a contraction $T$ on a Hilbert space which is called the characteristic function of $T$. These functions  gives valuable information about the structure of contractions.  Nagy-Foias characteristic function is useful in studying non-normal operators. The book by Nagy and Foias \cite{NAGY} is a good reference for the results related to non-normal contractions and the dilation theory. Further, Popescu \cite{POP1,POP2} explored the characteristic function in the non-commuting setup for row contractions. 
	
	Let $\clh_T$ be a Hilbert space such that $\{T_i\}_{i=1}^d\subseteq\mathcal{B}(\clh_T)$. Then $\underline{T}=(T_1,T_2,\ldots, T_d)$ is called a {\it row contraction}, if $\underset{i=1}{\overset{d}{\sum}}T_iT_i^*\leq I$, which is equivalent to saying that the matrix $\begin{bmatrix}
		T_1&T_2&\ldots&T_d
	\end{bmatrix}$ is a contraction. The characteristic function $M_{T}:\Gamma\otimes\mathcal{D}_T\rightarrow\Gamma\otimes\mathcal{D}_{*,T}$ of the row contraction $\underline{T}$ is defined by 
	\begin{equation}\label{pop charac fun}
		{	M_{T}}=-I_{\Gamma}\otimes \underline{T}+(I_{\Gamma}\otimes D_{*,T})({I_{\Gamma\otimes \clh_T}}-(\underline{R}\otimes I_{\clh_T})(I_{\Gamma} \otimes \underline{T}^*))^{-1}(\underline{R} \otimes I_{\clh_T})(I_{\Gamma} \otimes D_T),	
	\end{equation}
	where $\mathcal{D}_T$, $\mathcal{D}_{*,T}$ are the defect spaces associated to $\underline{T}$ and $\Gamma$ is the full Fock space defined on $\mathbb{C}^d$. Further, the symbol of $M_T$ is defined as $\theta_T=M_T|_{e_0\otimes\mathcal{D}_T}$. The articles
	\cite{POP1,POP2,POP3} are a few references on the fundamentals of the dilation theory for row contractions.
	
	For a row contraction $\underline{C}=(C_1,C_2,\ldots,C_d)$ on a Hilbert space $\clh_C$, we say a row contraction $\underline{E}=(E_1,E_2,\ldots,E_d)$ is a  {\it contractive lifting} of $\underline{C}$ by $\underline{A}$ on a Hilbert space $\clh_E=\clh_C\oplus \clh_A$, if we have
	\[E_i=\begin{bmatrix}
		C_i&0\\
		B_i&A_i
	\end{bmatrix}\text{ for }i=1,2,\ldots,d\] 
	with respect to this decomposition of $\clh_E$ where $\clh_A$ is some Hilbert space, and $B_i$ and $A_i$ are some bounded operators. Contractive liftings have been investigated in the context of commutant lifting theorem (\cite{NAGY}), weak Markov processes (\cite{GOHM}), Scattering theory (\cite{DEY gen}), etc. The book by Foias and Frazho \cite{FRAZHO} is an excellent source for topics that demonstrate the theoretical and practical aspects of contractive liftings. 
	\begin{definition}
		Let $\underline{C}$ be a row contraction  on a Hilbert space $\clh_C.$ A contractive lifting $\underline{E}$ of $\underline{C}$ on a Hilbert space $\clh_E \supset \clh_C$ is called {\it minimal,} if 
		\begin{align*}
			\clh_{E} = \overline{span}\{E_{\alpha}x: x\in \clh_C\;\;\text{for all }\,\alpha\in\tilde{\Lambda}\}.
		\end{align*}
	\end{definition}
	S. Dey and R. Gohm \cite{DEY} associated a characteristic function $M_{C,E}:\Gamma\otimes\mathcal{D}_E\rightarrow\Gamma\otimes\mathcal{D}_C$ with symbol $\theta_{C,E}:\mathcal{D}_E\rightarrow\Gamma\otimes\mathcal{D}_C$ to a minimal contractive lifting. We have the following expression for $\theta_{C,E}$:
	For $h_c\in \clh_C$,
	\begin{align}\label{chara fun deygohm1}
		\theta_{C,E}(e_0 \otimes D_E \iota^{\clh_E}_j(h_c))=e_0\otimes(D_Ch_C-\gamma D_{*,A}B_j h_c)-\underset{|\alpha|\geq 1}{\sum}e_{\alpha}\otimes \gamma D_{*,A}A_{\alpha}^*B_j h_c,\end{align}
	and for $h_a\in \clh_A$,
	\begin{align}\label{chara fun deygohm}
		\theta_{C,E}(e_0\otimes D_E \iota^{\clh_E}_j(h_a))=-e_0\otimes\gamma\underline{A}D_A \iota^{\clh_A}_j(h_a) +\underset{j=1}{\overset{d}{\sum}}e_j\otimes\underset{\alpha}{\sum}e_{\alpha}\otimes\gamma D_{*,A}A_{\alpha}^*P_jD_A^2 \iota^{\clh_A}_j(h_a),
	\end{align}
	where the function  $\iota^\clh_j: \clh \to \underset{i=1}{\overset{d}{\oplus}} \clh$ for any Hilbert space $\clh$ and $j=1,\ldots,d$ is defined by $\iota^\clh_j(h)=(0, \ldots, 0, h, 0 , \ldots, 0)$ where $h$ is in the $j^{th}$ component.
	
	S. Dey and R. Gohm \cite{DEY} studied properties of characteristic function in various settings. Also, different subclasses of contractive liftings are studied in \cite{DEY,DEYII,DEY minimal}.

	The paper is organised as follows: In section $2$, we fix some notations and recall the results that are used in subsequent sections. In section $3$, we show that, if Popescu characteristic function (\ref{pop charac fun}) coincides for two different completely non-co-isometric row contractions, then the row contractions are unitarily equivalent. This is already proved by Popescu \cite{POP2}, but here we give a different proof using colligation matrices. In section $4$, we give the following compact form for $\theta_{C,E}$, that is 
	\begin{align} \label{comp}
		\theta_{C,E}=\begin{bmatrix}
			D_{*,\gamma}&(I_{\Gamma}\otimes\gamma)\theta_A
		\end{bmatrix}
		{\sigma},
	\end{align}
	where $\sigma:\mathcal{D}_E\rightarrow\mathcal{D}_{*,\gamma}\oplus\mathcal{D}_A$ is a unitary operator. Also, we prove that $\theta_{C,E}$ can be realized as a transfer function for the following colligation matrix
	\[V=\begin{bmatrix}
		\underline{A}^*& D_AP_{D_A}\sigma\\
		\gamma D_{*,A}& (D_{*,\gamma}P_{D_{*,\gamma}}\sigma-\gamma \underline{A}P_{D_A}\sigma)
	\end{bmatrix}: \clh_A \oplus \mathcal{D}_E \to \clh^d_A \oplus \mathcal{D}_C.\]
	
	For a contraction  $T$ on  a Hilbert space and any $a$ in the open unit disc $\mathbb{D}$, define the operator $T_a := (T-aI)(I-\overline{a}T)^{ -1}.$ In section $5$, we prove that if $E$ is a minimal contractive lifting of a row contraction $C,$ then $E_a$ is a minimal contractive lifting of $C_a$, and the symbol $\theta_{C_a,E_a}(\lambda)$ of the characteristic function of the lifting $E_a$ coincides with $\theta_{C,E}(\frac{\lambda+a}{1+\overline{a}\lambda}).$ 

In section $6$, we show that if $\underline{E}$ and $\underline{E}'$ are two minimal contractive liftings of a row contraction $\underline{C}$, then $\underline{E}$ and $\underline{E}'$ are unitarily equivalent if and only if $\theta_{C,E}$ and $\theta_{C,E'}$ are equivalent. If $M_{\theta}: \Gamma \otimes \cld \to \Gamma \otimes \cld_C$ be a contractive multi-analytic operator with an injective symbol $\theta$, then $\theta$ has a decomposition as given in Equation (\ref{comp}) with $\gamma$ resolving (refer Equation (\ref{eqn gamma})) and $M_{\theta_A}$ is purely contractive and satisfy the Szeg\"o condition for some row contraction $\underline{A}$.  The converse of this statement also holds. We establish that for two contractive multi-analytic operators  $M_{\theta}: \Gamma \otimes \cld \to \Gamma \otimes \cld_C$ and $M_{\hat{\theta}}: \Gamma \otimes \clm \to \Gamma \otimes \cld_C$  with injective symbols, the symbols $\theta$ and $\hat{\theta}$ are equivalent if and only if the ordered pairs $(\gamma, \theta_A)$ and $(\gamma, \theta_{\hat{A}})$ corresponding to the liftings of $\underline{C}$ associated to $M_{\theta}$ and $M_{\hat{\theta}},$ respectively, are equivalent.

In section $7$, we illustrate our results with some concrete examples. In section $8$, we explore some applications of our result in the theory of colligation matrix and give a characterization of class of co-isometric observable colligations with basic operator $\underline{A}^*$ for which the dimension of the input space equal to the dimension of $\mathcal{D}_A,$ where $\underline{A}$ be a row contraction with a finite dimensional defect space $\mathcal{D}_A.$ 
	
	
	
	\section{ Preliminaries}

	The full Fock space on $\mathbb{C}^d$ is defined by
	\begin{align*}
		\Gamma(\mathbb{C}^d):=\mathbb{C}\oplus\mathbb{C}^d\oplus(\mathbb{C}^d)^{\otimes2}\oplus\cdots\oplus(\mathbb{C}^d)^{\otimes n}\oplus\cdots,
	\end{align*}
	where $e_0=1\oplus0\oplus0\oplus\cdots$ is called the {\it vaccum vector}. To simplify the notation, we use $\Gamma$, for $\Gamma(\mathbb{C}^d)$. We denote the standard ordered basis of $\mathbb{C}^d$ by $\{e_1,e_2,\ldots,e_d \}$. For $x\in\Gamma(\mathbb{C}^d)$, the operators
	$$L_ix=e_i\otimes x \text{ and }R_ix=x\otimes e_i,\,i=1,2,\ldots,d,$$
	are called the {\it left creation} and {\it right creation} operators, respectively. The tuples $(L_1, \ldots, L_d)$ and $(R_1, \ldots, R_d)$ are row contractions. We use the notation $\Lambda$ for the set $\{1,2,\ldots,d\}$ and $\tilde{\Lambda}:=\underset{n=0}{\overset{\infty}{\bigcup}}\Lambda^n$, where $\Lambda^0=\{0\}$. For $\alpha=(\alpha_1,\ldots,\alpha_n)\in\Lambda^n$, we define $|\alpha|:=\alpha_1+\alpha_2+\cdots +\alpha_n$ and $e_\alpha= e_{\alpha_1} \otimes e_{\alpha_{2}} \otimes \ldots \otimes e_{\alpha_n}$. 
	
	For a row contraction $\underline{T}=(T_1,T_2,\ldots, T_d)$ on a Hilbert space $\clh_T$, the {\it defect operators} and {\it defect spaces} associated with $\underline{T}$ are defined by
	\begin{align*}
		D_T&=(I-\underline{T}^*\underline{T})^{1/2}:\underset{i=1}{\overset{d}{\oplus}}\clh_T\rightarrow\underset{i=1}{\overset{d}{\oplus}}\clh_T,\\
		D_{*,T}&=(I-\underline{T} \, \underline{T}^*)^{1/2}:\clh_T\rightarrow \clh_T,\text{ and}\\
		\mathcal{D}_T&=\overline{\text{ran}}\,D_T,\,\mathcal{D}_{*,T}=\overline{\text{ran}}\,D_{*,T},
	\end{align*}
	respectively. For $\alpha=(\alpha_1,\ldots,\alpha_n)\in\Lambda^n$, $T_{\alpha}=T_{\alpha_1}\ldots T_{\alpha_n}$. We say the row contraction $\underline{T}$ is {\it completely non-co-isometric} (or c.n.c. in short), if
	$$\clh_T^{1}=\left\{h\in \clh_T:\underset{|\alpha|=n}{\sum}\|T_{\alpha}^*h\|^2=\|h\|^2 \text{ for all }n\in\mathbb{N}\right\}=\{0\}.$$
	It is easy to observe that $D_{*,T}$ is an injective operator, if  $\underline{T}$ is a c.n.c. row contraction.
	
	Let $\underline{T}=(T_1,T_2,\ldots,T_d)$ be a c.n.c. row contraction on a Hilbert space $\clh_T$. The {\it characteristic function} {$M_{T}:\Gamma\otimes \cld_T\rightarrow\Gamma\otimes \cld_{*,T}$  of the row contraction $\underline{T}$ is defined by Equation \eqref{pop charac fun} 
			and its symbol $\theta_{T}:\mathcal{D}_T\rightarrow\Gamma\otimes\mathcal{D}_{*,T}$ is $\theta_{T}=M_T|_{e_0\otimes\mathcal{D}_T}.$} Then
		\begin{equation}\label{popescu chara fn}
			\theta_{T}=-e_0 \otimes \underline{T}+(I_{\Gamma}\otimes D_{*,T})({I_{\Gamma\otimes \clh_T}}-(\underline{R}\otimes I_{\clh_T})(I_{\Gamma} \otimes \underline{T}^*))^{-1}(\underline{R} \otimes I_{\clh_T})(I_{\Gamma} \otimes  D_T).
		\end{equation} 
		{We can identify $\mathcal{D}_T$ with closed linear span of elements of the form $e_0\otimes D_Th$ for $h\in H$. Then $\theta_{T}$ can be written as 
			\begin{equation*}
				\theta_{T}=- \underline{T}+ I_{\Gamma}\otimes D_{*,T}(I_{\Gamma\otimes \clh_T}-(\underline{R}\otimes I_{\clh_T})(I_{\Gamma} \otimes\underline{T}^*))^{-1}(\underline{R} \otimes I_{\clh_T})(I_{\Gamma} \otimes  D_T).
			\end{equation*}
			
			A contractive colligation on $d$-variable is an operator 
			\[W=\begin{bmatrix}
				T&F\\
				G&H
			\end{bmatrix}=\begin{bmatrix}
				T_1&F_1\\
				T_2&F_2\\
				\vdots&\vdots\\
				T_d&F_d\\
				G&H
			\end{bmatrix}:\clh_1\oplus \clh_2\rightarrow \clh_1^d\oplus \clh_3,\] 
			where $\clh_1, \clh_2$ and $\clh_3$ are Hilbert spaces,  and $\clh^d_1:=\underset{i=1}{\overset{d}{\oplus}} \clh_1.$ The spaces $\clh_1, \clh_2$ and $\clh_3$ are called {\it the state space, the input space} and {\it the output space,} respectively. The operator 
			$T$ is called the {\it basic operator} of the colligation and the function 
			\begin{equation} \label{transfer}
				\Theta_W=H+G({I_{\Gamma\otimes \clh_1}}-(\underline{R}\otimes I_{\clh_1})(I_{\Gamma} \otimes  T))^{-1}(\underline{R}\otimes I_{\clh_1})(I_{\Gamma}\otimes F)
			\end{equation}
			is called the associated {\it transfer function} of the colligation.  Note that $\Theta_W\in\Gamma\otimes\mathcal{B}(\clh_2,\clh_3)$. We say $W$ is {\it observable}, if the observability operator $\mathcal{O}_{G,T}:\clh_1\rightarrow \Gamma\otimes \clh_3$ defined by
			\begin{align}
				\mathcal{O}_{G,T}=G({I_{\Gamma\otimes \clh_1}}-(\underline{R}\otimes I_{\clh_1})(I_{\Gamma} \otimes  T))^{-1}
			\end{align}  
			is an injective operator.
			
			{For two Hilbert spaces $\clh_1$ and $\clh_2$, let $\mathcal{M}_{nc,d}(\clh_1,\clh_2)$ consists of functions $\Theta\in\Gamma\otimes\mathcal{B}(\clh_1,\clh_2)$ for which $M_{\Theta}f=\Theta f$ is a well defined bounded linear operator from $\Gamma\otimes \clh_1$ to $\Gamma\otimes \clh_2$. From \cite{BALL}, we know that $M_{\Theta}(L_i\otimes I_{\clh_1})=(L_i\otimes I_{\clh_2})M_{\Theta}$ for $i=1,2,\ldots,d$. We define
				\begin{align*}
					\mathcal{S}_{nc,d}(\clh_1,\clh_2)=\{\Theta\in\mathcal{M}_{nc,d}(\clh_1,\clh_2):\|M_{\Theta}\|\leq 1\}.
				\end{align*}
				For a contractive colligation matrix $W,$ we know that $\Theta_W\in\mathcal{S}_{nc,d}$, by \cite[Theorem 1.1]{BALL}.}
			
			We use the notion of unitary equivalence of two colligation matrices defined as follows:
			\begin{definition}\cite[Page 527-528]{BALL}
				Let \[W=\begin{bmatrix}
					T&F\\
					G&H
				\end{bmatrix}:\clh_1\oplus \clh_2\rightarrow \clh_1^d\oplus \clh_3\] and \[W'=\begin{bmatrix}
					T'&F'\\
					G'&H'
				\end{bmatrix}:\clh_1'\oplus \clh_2\rightarrow \clh_1'^d\oplus \clh_3\] be two colligations where $\clh_1, \clh_2, \clh_3$ and $\clh_1'$ are Hilbert spaces . These colligations are said to be {\it unitarily equivalent} if there exist a unitary operator $U:\clh_1\rightarrow \clh_1'$ such that
				\begin{align*}
					\begin{bmatrix}
						\underset{i=1}{\overset{d}{\oplus}}U&0\\
						0&I_{\underset{i=1}{\overset{d}{\oplus}}\clh_3}
					\end{bmatrix}\begin{bmatrix}
						T&F\\
						G&H
					\end{bmatrix}=\begin{bmatrix}
						T'&F'\\
						G'&H'
					\end{bmatrix}\begin{bmatrix}
						U&0\\
						0&I_{\clh_2}
					\end{bmatrix}.
				\end{align*}
			\end{definition}
			The following result gives the invariance of the transfer function under unitarily equivalent colligation matrices.
			\begin{lemma}\cite[Corollary 3.9]{BALL}\label{lemma ball}
				Any two observable, co-isometric colligation matrices $W$ and $W'$ for same associated transfer function are unitarily equivalent.
			\end{lemma}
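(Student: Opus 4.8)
The plan is to show that, for an observable co-isometric colligation, the observability operator $\mathcal{O}_{C,A}$ is a rigid object determined by $\theta_W$ alone, so that the state spaces of $W$ and $W'$ can be identified through it. Three elementary identities drive everything. Writing $P_0\colon\Gamma\otimes H_3\to H_3$ for the vacuum (degree-zero) coefficient and expanding the Neumann series in the definition of $\mathcal{O}_{C,A}$, one reads off directly that $P_0\mathcal{O}_{C,A}=C$, that $(L_i^*\otimes I_{H_3})\mathcal{O}_{C,A}=\mathcal{O}_{C,A}A_i$ for each $i$, and, comparing coefficients in \eqref{transfer}, that $\mathcal{O}_{C,A}B_i=(L_i^*\otimes I_{H_3})(\theta_W-D)$, where $D$ is the vacuum coefficient of $\theta_W$. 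The same three identities hold verbatim for $W'$. Hence, once we produce a unitary $U\colon H_1\to H_1'$ with $\mathcal{O}_{C,A}=\mathcal{O}_{C',A'}U$, the four block relations defining unitary equivalence follow mechanically: $C=P_0\mathcal{O}_{C,A}=P_0\mathcal{O}_{C',A'}U=C'U$; the shift intertwining together with injectivity of $\mathcal{O}_{C',A'}$ forces $UA_i=A_i'U$; the $B$-identity together with $\theta_W=\theta_{W'}$ forces $UB_i=B_i'$; and $D=D'$ since both equal the vacuum coefficient of the common $\theta$. It is here that the observability hypothesis is used, precisely as the injectivity needed to cancel $\mathcal{O}_{C',A'}$.

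The heart of the argument is thus the construction of $U$, and this is where co-isometry is essential. The key claim is the kernel identity
\[
\mathcal{O}_{C,A}\mathcal{O}_{C,A}^*=I_{\Gamma\otimes H_3}-M_{\theta_W}M_{\theta_W}^*,
\]
which I would derive from the three block relations of co-isometry, namely $AA^*+BB^*=I$, $CA^*+DB^*=0$ and $CC^*+DD^*=I$. The derivation is an Agler-type telescoping: expand $I-M_{\theta_W}M_{\theta_W}^*$ using \eqref{transfer}, substitute the co-isometry relations to collapse the cross terms, and recognize the surviving sum as $\mathcal{O}_{C,A}\mathcal{O}_{C,A}^*$ through the resolvent expansion of $(I_{\Gamma\otimes H_1}-(\underline R\otimes I_{H_1})(I_\Gamma\otimes A))^{-1}$. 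Granting this, the identity holds for $W'$ as well, and since the right-hand sides coincide (same $\theta$) we obtain $\mathcal{O}_{C,A}\mathcal{O}_{C,A}^*=\mathcal{O}_{C',A'}\mathcal{O}_{C',A'}^*$.

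From this equality one builds $U$ in the standard way: set $U(\mathcal{O}_{C,A}^*\xi)=\mathcal{O}_{C',A'}^*\xi$ for $\xi\in\Gamma\otimes H_3$, which is well defined and isometric because $\|\mathcal{O}_{C',A'}^*\xi\|^2=\langle\mathcal{O}_{C',A'}\mathcal{O}_{C',A'}^*\xi,\xi\rangle=\langle\mathcal{O}_{C,A}\mathcal{O}_{C,A}^*\xi,\xi\rangle=\|\mathcal{O}_{C,A}^*\xi\|^2$. Observability gives $\overline{\operatorname{ran}}\,\mathcal{O}_{C,A}^*=H_1$ and $\overline{\operatorname{ran}}\,\mathcal{O}_{C',A'}^*=H_1'$, so this map extends to a unitary $U\colon H_1\to H_1'$ satisfying $U\mathcal{O}_{C,A}^*=\mathcal{O}_{C',A'}^*$; taking adjoints gives $\mathcal{O}_{C,A}U^*=\mathcal{O}_{C',A'}$, and multiplying on the right by $U$ yields $\mathcal{O}_{C,A}=\mathcal{O}_{C',A'}U$. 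The first paragraph then closes the argument.

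I expect the kernel identity to be the main obstacle. It is exactly the statement that an observable co-isometric realization recovers the full de Branges--Rovnyak space $\operatorname{ran}(I-M_{\theta_W}M_{\theta_W}^*)^{1/2}$ attached to $\theta_W$, and its verification requires the careful, if mechanical, bookkeeping of the noncommutative resolvent together with all three co-isometry relations. Everything else reduces to formal manipulation of the observability operator.
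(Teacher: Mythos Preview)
The paper does not supply its own proof of this lemma: it is quoted verbatim as \cite[Corollary~3.9]{BALL} and used as a black box, so there is no in-paper argument to compare against. Your sketch is in fact the standard route to this result and is essentially what one finds in the cited reference: the co-isometry relations force the de Branges--Rovnyak kernel identity $\mathcal{O}_{C,A}\mathcal{O}_{C,A}^*=I-M_\theta M_\theta^*$, equality of transfer functions then gives $\mathcal{O}_{C,A}\mathcal{O}_{C,A}^*=\mathcal{O}_{C',A'}\mathcal{O}_{C',A'}^*$, observability (injectivity, hence dense range of the adjoint) lets one define the unitary $U$ on $\overline{\operatorname{ran}}\,\mathcal{O}_{C,A}^*=H_1$, and the intertwining relations for $A,B,C,D$ drop out from the coefficient identities you list. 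The only place to be careful is the bookkeeping with the right creation operators $\underline R$ in this paper's conventions when you verify $(L_i^*\otimes I)\mathcal{O}_{C,A}=\mathcal{O}_{C,A}A_i$ and the kernel identity, but nothing in your outline is wrong.
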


			\section{Characteristic functions for c.n.c. row contractions}
			In this section, our aim is to show that two c.n.c. row contractions with same characteristic function (in the sense of Popescu \cite{POP2}) are unitarily equivalent.
			
			Let $\clh_1,\tilde{\clh}_1,\clh_2$ and $\tilde{\clh}_2$ be Hilbert spaces. We say that two multi-analytic operators {$M_1:\Gamma\otimes \clh_1\rightarrow\Gamma\otimes \tilde{\clh}_1$ and $M_2:\Gamma\otimes \clh_2\rightarrow\Gamma\otimes \tilde{\clh}_2$ {\it coincide,} if there exist unitary operators $U: \clh_1\rightarrow  \clh_2$ and $\tilde{U}: \tilde{\clh}_1\rightarrow\tilde{\clh}_2$ such that
				\begin{equation} \label{coincide}
					(I_{\Gamma}\otimes\tilde{U})M_1=M_2 (I_{\Gamma}\otimes U).
				\end{equation}
				Let $\theta_1$ and $\theta_2$ be the symbols of $M_1$ and $M_2$ respectively. 
				The Equation (\ref{coincide}) is equivalent to 
				$$(I_{\Gamma}\otimes\tilde{U})\theta_1=\theta_2U.$$}
			In this case we also say that $\theta_1$ and $\theta_2$ {\it coincides.}

				\begin{definition}  
					For an operator $\theta : \cld \to \Gamma \otimes \mathcal{L}$ in $\mathcal{S}_{nc,d},$ the $M_\theta$ is said to be {\it purely contractive} if $\| P_{e_0 \otimes \mathcal{L}} \theta (d)\| < \|d\|$ for all $0 \neq d \in \cld.$
				\end{definition}      
				
				\begin{definition}  
					For an operator $\theta : \cld \to \Gamma \otimes \mathcal{L}$ in $\mathcal{S}_{nc,d},$ the $M_\theta$   is said to {\it satisfy Szeg\"o condition} if 
					\[ \overline{\Delta(\Gamma \otimes \cld)} = \overline{\Delta((\Gamma \otimes \cld) \ominus (e_0 \otimes \cld))},\] 
					where $\Delta=(I- M^*_\theta M_\theta )^{\frac{1}{2}}.$
				\end{definition}

				Let $\underline{A}$ be a c.n.c. row contraction. Then $M_{\theta_A}$ is both purely contractive and satisfy Szeg\"o condition.
				It is easy to see that $\theta_{A}$ is also the transfer function for the following colligation matrix}
			\begin{align}\label{eqn colligation popescu}
				W_A=\begin{bmatrix}
					\underline{A}^*& D_A\\
					D_{*,A}&-\underline{A}
				\end{bmatrix}.
			\end{align}
			Also $\theta_{A}\in\mathcal{S}_{nc,d}$ for a c.n.c. row contraction $\underline{A}$.
			\begin{remark}\label{remark 3.1}
				Note that $W_A$ is co-isometric (that is $W_AW_A^*=I$) for any row contraction $\underline{A}=(A_1,A_2,\ldots, A_d)$.
			\end{remark}

			We begin with a simple observation about the colligation matrix $W_A$ defined in Equation (\ref{eqn colligation popescu}). The proof of the following lemma is immediate (cf \cite{POPII}):
			\begin{lemma}\label{lemma sec 3}
				Let $\underline{A}=(A_1,A_2,\ldots,A_d)$ be a c.n.c. row contraction on a Hilbert space $\clh_A$. 
				Then $W_A$ is observable.
			\end{lemma}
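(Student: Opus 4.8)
The goal is to show that the observability operator
\[
\mathcal{O}_{\underline{A}^*,\,D_{*,A}} = D_{*,A}\bigl(I_{\Gamma\otimes H_A}-(\underline{R}\otimes I_{H_A})(I_\Gamma\otimes\underline{A}^*)\bigr)^{-1}
\]
associated to the colligation $W_A$ is injective. Here the basic operator of $W_A$ is $\underline{A}^*$ and the output coefficient is $D_{*,A}$, so injectivity of $\mathcal{O}_{\underline{A}^*,\,D_{*,A}}$ is exactly the observability condition from the definition in Section 2.

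Let me sketch my proof. I would expand the inverse as a Neumann series, since $\underline{A}^*$ is (the adjoint of) a row contraction and the relevant operator is a strict contraction on the appropriate subspace, giving
\[
\bigl(I_{\Gamma\otimes H_A}-(\underline{R}\otimes I)(I_\Gamma\otimes\underline{A}^*)\bigr)^{-1}
=\sum_{n\ge 0}\bigl((\underline{R}\otimes I)(I_\Gamma\otimes\underline{A}^*)\bigr)^n .
\]
Applying $\mathcal{O}_{\underline{A}^*,\,D_{*,A}}$ to a vector $e_0\otimes h$ with $h\in H_A$ and collecting terms graded by word length, I expect each homogeneous component to be of the form $e_\alpha\otimes D_{*,A}A_\alpha^* h$ (up to reindexing of the word $\alpha\in\tilde\Lambda$), because $\underline{R}$ shifts into the Fock space while $\underline{A}^*$ produces the products $A_\alpha^*$. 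The plan is to compute the norm of $\mathcal{O}_{\underline{A}^*,\,D_{*,A}}(e_0\otimes h)$ by summing the squared norms of these homogeneous pieces.

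The key identity I would exploit is the telescoping relation $\sum_{i=1}^d A_i D_{*,A}^2 A_i^* = \underline{A}\,\underline{A}^* - (\underline{A}\,\underline{A}^*)^2$ together with $D_{*,A}^2 = I - \underline{A}\,\underline{A}^*$, so that summing $\|D_{*,A}A_\alpha^* h\|^2$ over all words $\alpha$ of length $n$ telescopes. Concretely, summing over $|\alpha|\le N$ should yield something like $\|h\|^2 - \sum_{|\alpha|=N+1}\|A_\alpha^* h\|^2$, using that the partial sums of $\sum_{|\alpha|=n}\|A_\alpha^* h\|^2$ form a decreasing sequence (this monotonicity is the standard fact underlying the set $H_T^1$ in the c.n.c.\ definition). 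Letting $N\to\infty$ gives
\[
\norm{\mathcal{O}_{\underline{A}^*,\,D_{*,A}}(e_0\otimes h)}^2 = \|h\|^2 - \lim_{n\to\infty}\sum_{|\alpha|=n}\|A_\alpha^* h\|^2 .
\]

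Finally, I would invoke the c.n.c.\ hypothesis. If $\mathcal{O}_{\underline{A}^*,\,D_{*,A}}(e_0\otimes h)=0$, the identity above forces $\|h\|^2 = \lim_{n\to\infty}\sum_{|\alpha|=n}\|A_\alpha^* h\|^2$; but since the partial sums are decreasing and bounded above by $\|h\|^2$, this equality at the limit forces $\sum_{|\alpha|=n}\|A_\alpha^* h\|^2 = \|h\|^2$ for every $n$, which means $h\in H_A^1 = \{0\}$. Hence $h=0$ and $\mathcal{O}_{\underline{A}^*,\,D_{*,A}}$ is injective on $e_0\otimes H_A$, which is what observability requires. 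I expect the main obstacle to be the bookkeeping in Step 2, namely verifying that the homogeneous components of the Neumann expansion are mutually orthogonal (they live over distinct Fock-space words $e_\alpha$) and that the telescoping in Step 3 is set up with the correct combinatorics over $\tilde\Lambda$; once the orthogonality and the telescoping identity are in place, the c.n.c.\ conclusion is immediate.
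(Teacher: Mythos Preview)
Your argument is correct, but it takes a considerably longer route than the paper's proof. The paper observes (in the preliminaries) that for any c.n.c.\ row contraction $\underline{A}$ the defect operator $D_{*,A}$ is injective on $H_A$; this alone already finishes the lemma, because the observability operator is the composition of the invertible map $(I_{\Gamma\otimes H_A}-(\underline{R}\otimes I)(I_\Gamma\otimes\underline{A}^*))^{-1}$ with the injective map $I_\Gamma\otimes D_{*,A}$, hence injective.

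By contrast, you expand the Neumann series, identify the homogeneous pieces as $e_\alpha\otimes D_{*,A}A_\alpha^*h$, telescope to obtain the explicit Poisson-kernel-type identity
\[
\norm{\mathcal{O}_{\underline{A}^*,D_{*,A}}(e_0\otimes h)}^2=\|h\|^2-\lim_{n\to\infty}\sum_{|\alpha|=n}\|A_\alpha^*h\|^2,
\]
and then invoke the full definition of $H_A^1=\{0\}$. This is valid and the bookkeeping you flag (orthogonality of the $e_\alpha$-components and the telescoping over $\tilde\Lambda$) goes through exactly as you describe. Your approach yields more: an explicit norm formula for the observability operator, which is useful in its own right. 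The paper's approach is quicker precisely because it has already isolated the one consequence of c.n.c.\ that is needed here, namely injectivity of $D_{*,A}$, rather than returning to the defining condition on $H_A^1$.
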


			If $\underline{A}$ and $\underline{A}'$ be two unitarily equivalent c.n.c. row contractions, then using direct computation, we know that $\theta_{A}$ and  $\theta_{A'}$ coincides (cf. \cite{POP2}). For the converse, Popescu used the functional model. Here we present an alternate proof for the converse using colligation matrix.
			\begin{theorem}\label{thm Popescu characteristic fn}
				Let $\underline{A}=(A_1,A_2,\ldots A_d)$ and $\underline{A}'=(A_1',A_2',\ldots,A_d')$ be two c.n.c. row contractions on Hilbert spaces $\clh_A$ and $\clh_{A'}$. If the characteristic functions $\theta_{A}$ and $\theta_{A'}$  of row contractions coincide, then $\underline{A}$ and $\underline{A}'$ are unitarily equivalent.
			\end{theorem}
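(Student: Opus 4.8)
The plan is to deduce the statement from the rigidity of observable, co-isometric colligations, namely Lemma~\ref{lemma ball}. The colligation $W_A$ is co-isometric by Remark~\ref{remark 3.1} and observable by Lemma~\ref{lemma sec 3}, with transfer function $\theta_A\in\mathcal{S}_{nc,d}(\mathcal{D}_A,\mathcal{D}_{*,A})$; the same holds for $W_{A'}$ with transfer function $\theta_{A'}$. The only thing preventing a direct appeal to Lemma~\ref{lemma ball} is that $W_A$ and $W_{A'}$ have different input spaces ($\mathcal{D}_A$ versus $\mathcal{D}_{A'}$) and different output spaces ($\mathcal{D}_{*,A}$ versus $\mathcal{D}_{*,A'}$), whereas the lemma compares colligations sharing a common input and output space. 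The coincidence hypothesis supplies precisely the unitaries $U:\mathcal{D}_A\to\mathcal{D}_{A'}$ and $\tilde U:\mathcal{D}_{*,A}\to\mathcal{D}_{*,A'}$, with $(I_\Gamma\otimes\tilde U)\theta_A=\theta_{A'}U$, that remove this discrepancy.

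Using these unitaries I would conjugate $W_{A'}$ only in its input and output coordinates, leaving the state space $H_{A'}$ and the basic operator $(\underline{A}')^*$ untouched:
\begin{align*}
\hat W=\begin{bmatrix}I_{H_{A'}^d}&0\\0&\tilde U^*\end{bmatrix}W_{A'}\begin{bmatrix}I_{H_{A'}}&0\\0&U\end{bmatrix}=\begin{bmatrix}(\underline{A}')^*&D_{A'}U\\ \tilde U^*D_{*,A'}&-\tilde U^*\underline{A}'U\end{bmatrix}.
\end{align*}
Now $\hat W$ has input space $\mathcal{D}_A$ and output space $\mathcal{D}_{*,A}$, matching $W_A$. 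Since the conjugation acts by unitaries purely on the input/output coordinates while fixing the state part, the transfer function transforms as $\theta_{\hat W}=(I_\Gamma\otimes\tilde U^*)\theta_{A'}U$; substituting the coincidence relation gives $\theta_{\hat W}=(I_\Gamma\otimes\tilde U^*)(I_\Gamma\otimes\tilde U)\theta_A=\theta_A$. Co-isometry survives because $\hat W\hat W^*$ collapses to $\mathrm{diag}(I,\tilde U^*)\,W_{A'}W_{A'}^*\,\mathrm{diag}(I,\tilde U)=I$, the inner unitary factors coming from $U$ cancelling. Observability survives because the observability operator of $\hat W$ equals $(I_\Gamma\otimes\tilde U^*)$ composed with that of $W_{A'}$, hence is a unitary times an injective operator. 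Thus $W_A$ and $\hat W$ are both observable, co-isometric colligations with the common transfer function $\theta_A$.

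Finally, Lemma~\ref{lemma ball} produces a unitary $V:H_A\to H_{A'}$ on the state spaces with
\begin{align*}
\begin{bmatrix}\bigoplus_{i=1}^{d}V&0\\0&I\end{bmatrix}W_A=\hat W\begin{bmatrix}V&0\\0&I\end{bmatrix}.
\end{align*}
Comparing the $(1,1)$ entries gives $\left(\bigoplus_{i=1}^{d}V\right)\underline{A}^*=(\underline{A}')^*V$; taking adjoints yields $\underline{A}\left(\bigoplus_{i=1}^{d}V^*\right)=V^*\underline{A}'$, and reading off the $i$-th coordinate shows $A_iV^*=V^*A_i'$, that is $VA_iV^*=A_i'$ for every $i$. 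This is exactly the unitary equivalence of $\underline A$ and $\underline{A}'$.

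I expect the main obstacle to be the bookkeeping of the construction of $\hat W$: one must be sure that conjugating $W_{A'}$ on the input and output coordinates alone is the correct move, so that the resolvent (state-space) part is untouched and the transfer function acquires exactly the factors $I_\Gamma\otimes\tilde U^*$ on the left and $U$ on the right that match the coincidence relation. Verifying that co-isometry and observability are both preserved under this conjugation is routine but essential, since Lemma~\ref{lemma ball} requires both hypotheses together with the now-common input and output spaces.
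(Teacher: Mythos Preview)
Your proposal is correct and follows essentially the same approach as the paper: both use the unitaries from the coincidence relation to align the input and output spaces so that Lemma~\ref{lemma ball} applies, and then read off the state-space unitary intertwining $\underline{A}^*$ and $(\underline{A}')^*$. The only cosmetic difference is that the paper modifies one side of each colligation (post-composing $W_A$ with $\tilde U$ on the output and pre-composing $W_{A'}$ with $U$ on the input), whereas you conjugate $W_{A'}$ alone on both input and output; the resulting argument is the same.
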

			\begin{proof}
				Let $\theta_{A}$ and $\theta_{A'}$ coincide. Then there exist two unitaries $U_1:\mathcal{D}_A\rightarrow \mathcal{D}_{A'}$ and $U_2:\mathcal{D}_{*,A}\rightarrow \mathcal{D}_{*,A'}$ such that $ (I_{\Gamma}\otimes U_2)\theta_{A}=\theta_{A'}  U_1$.
				
				From Equation (\ref{eqn colligation popescu}), we know that $ (I_{\Gamma}\otimes U_2)\theta_{A}$ and $\theta_{A'} U_1$ are the transfer functions of the  colligation matrices
				{\begin{align*}
						W_A=\begin{bmatrix}
							\underline{A}^*& D_A\\
							U_2D_{*,A}&- U_2\underline{A}	\end{bmatrix}\text{ and }
						W_{A'}=\begin{bmatrix}
							{\underline{A}'}^*& D_{{\underline{A}'}}U_1\\
							D_{*,{\underline{A}'}}&-{\underline{A}'}U_1
						\end{bmatrix},
				\end{align*}}
				respectively. Using Lemma \ref{lemma ball}, Remark \ref{remark 3.1} and Lemma \ref{lemma sec 3}, we deduce that $W_A$ and $W_{A'}$ are unitarily equivalent. Thus there exists unitary $S:\clh_A\rightarrow \clh_{A'}$ such that
				\begin{enumerate}
					\item $(\underset{i=1}{\overset{d}{\oplus}} S) \underline{A}^*= \underline{A}'^*S$,
					\item {$ U_2D_{*,A}= D_{*,{A'}}S$,
						\item $\left(\underset{i=1}{\overset{d}{\oplus}}S\right) D_A= D_{{A'}}U_1$,
						\item $ U_2 \underline{A} = \underline{A}' U_1$.}
				\end{enumerate}
				From above equations, we get that $U_2= S|_{\mathcal{D}_{*,A}}$, $U_1=\left(\underset{i=1}{\overset{d}{\oplus}}S\right)|_{\mathcal{D}_A}$. Hence $\underline{A}$ and $\underline{A}'$ are unitarily equivalent.
			\end{proof}
			\begin{remark}\label{Remark popescu charac fn}
				From the proof of Theorem \ref{thm Popescu characteristic fn}, we observe that, if $\left(\underset{i=1}{\overset{d}{\oplus}}U\right) \underline{A}^* =\underline{A}'^* U$ for some unitary operator $U:\clh_A\rightarrow \clh_{A'}$, then $ \left(I_{\Gamma}\otimes\tilde{U}_1\right)\theta_{A}=\theta_{A'}\left(\underset{i=1}{\overset{d}{\oplus}}\tilde{U}_2\right)$, where  $\tilde{U}_1=U|_{\mathcal{D}_{*,A}}$  and {$\underset{i=1}{\overset{d}{\oplus}}\tilde{U}_2=\left(\underset{i=1}{\overset{d}{\oplus}}U\right)|_{\mathcal{D}_A}.$}
			\end{remark}

			\section{Liftings of row contractions} \label{lrc}
			In this section, we give a compact form of the characteristic function of contractive liftings, which is defined by S. Dey and R. Gohm \cite{DEY}. 
			
			First, we recall the definition of lifting of a row contraction, which we have stated in the introduction. Let $\underline{C}=(C_1,C_2,\ldots,C_d)$ be a row contraction on $\clh_C$. Then tuple  $\underline{E}=(E_1,E_2 \dots, E_d)$ of bounded operators on $\clh_E\supseteq \clh_A$ is called a lifting of $\underline{C}$ if $P_{\clh_C}E_i=C_iP_{\clh_C}$ for $i=1,\ldots,d$ or equivalently, $E_i$ has a matrix representation of the form 
			\[E_i=\begin{bmatrix}
				C_i&0\\
				B_i&A_i
			\end{bmatrix}\text{ for }i=1,2,\ldots,d.\]
			If $\underline{E}$ is a row contraction, then $\underline{E}$ is called a contractive lifting of $\underline{C}$. The following result from \cite{DEY} gives a representation for a contractive lifting of a row contraction.
			\begin{proposition}\cite[Proposition 3.1]{DEY}\label{Prop lifting}
				$\underline{E}=(E_1,E_2,\ldots,E_d)$ on	$\clh_E=\clh_C\oplus \clh_A$ with block matrices
				\[E_i=\begin{bmatrix}
					C_i&0\\
					B_i&A_i,
				\end{bmatrix}\]  for $i=1,2,\ldots,d$, is a row contraction if and only if $\underline{C}$ and $\underline{A}$ are row contractions and there exists a contraction $\gamma:\mathcal{D}_{*,A}\rightarrow \mathcal{D}_C$ such that $\underline{B}^*=D_C\gamma D_{*,A}.$
			\end{proposition}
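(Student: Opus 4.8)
The plan is to reduce the row-contraction condition on $\underline{E}$ to a single scalar inequality and then extract the factorization of $\underline{B}$ by applying the Douglas majorization lemma twice. First I would observe that, after reshuffling the summands $\bigoplus_{i=1}^{d}H_E\cong\big(\bigoplus_{i=1}^{d}H_C\big)\oplus\big(\bigoplus_{i=1}^{d}H_A\big)$, the row $[E_1\ \cdots\ E_d]$ is unitarily equivalent to the lower-triangular block operator $\widetilde{E}=\begin{bmatrix}\underline{C}&0\\\underline{B}&\underline{A}\end{bmatrix}$ from $(\oplus H_C)\oplus(\oplus H_A)$ to $H_C\oplus H_A$, so $\underline{E}$ is a row contraction if and only if $\widetilde{E}$ is a contraction. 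Since $\|\widetilde{E}(c,a)\|^2=\|\underline{C}c\|^2+\|\underline{B}c+\underline{A}a\|^2$, contractivity of $\widetilde{E}$ is equivalent to
\begin{equation*}
\|\underline{C}c\|^{2}+\|\underline{B}c+\underline{A}a\|^{2}\le\|c\|^{2}+\|a\|^{2}\tag{$\dagger$}
\end{equation*}
for all $c\in\bigoplus_{i=1}^{d}H_C$ and $a\in\bigoplus_{i=1}^{d}H_A$. Throughout I would use only the two elementary identities $D_C^{2}=I-\underline{C}^{*}\underline{C}$, which gives $\|c\|^2=\|\underline{C}c\|^2+\|D_Cc\|^2$, and $D_{*,A}^{2}+\underline{A}\,\underline{A}^{*}=I$, which says that $[\,D_{*,A}\ \ \underline{A}\,]\colon H_A\oplus(\oplus H_A)\to H_A$ is a coisometry, hence a contraction.

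For the easy implication I would assume $\underline{C},\underline{A}$ are row contractions and $\underline{B}^{*}=D_C\gamma D_{*,A}$ with $\|\gamma\|\le1$, so that $\underline{B}=D_{*,A}\gamma^{*}D_C$. Then $\underline{B}c+\underline{A}a=[\,D_{*,A}\ \ \underline{A}\,]\,(\gamma^{*}D_Cc,\,a)$, and the coisometry bound together with $\|\gamma^{*}\|\le1$ yields $\|\underline{B}c+\underline{A}a\|^{2}\le\|\gamma^{*}D_Cc\|^{2}+\|a\|^{2}\le\|D_Cc\|^{2}+\|a\|^{2}$. Adding $\|\underline{C}c\|^{2}$ and invoking $\|\underline{C}c\|^2+\|D_Cc\|^2=\|c\|^2$ gives exactly $(\dagger)$. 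This direction is a one-line estimate once the substitution is made.

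The converse is the substantive part. From $(\dagger)$ with $a=0$ I get $\underline{C}^{*}\underline{C}+\underline{B}^{*}\underline{B}\le I$, whence $\underline{C}$ is a row contraction and $\underline{B}^{*}\underline{B}\le D_C^{2}$; with $c=0$ I get that $\underline{A}$ is a row contraction. The majorization $\underline{B}^{*}\underline{B}\le D_C^{2}$ lets me apply Douglas' lemma to factor $\underline{B}^{*}=D_CG$ for a contraction $G\colon H_A\to\mathcal{D}_C$ with $\operatorname{ran}G\subseteq\mathcal{D}_C$ and $\ker G=\ker\underline{B}^{*}$ (so $G^{*}$ annihilates $\mathcal{D}_C^{\perp}$). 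Substituting $\underline{B}=G^{*}D_C$ into $(\dagger)$ and cancelling $\|\underline{C}c\|^{2}$ via $\|c\|^{2}=\|\underline{C}c\|^{2}+\|D_Cc\|^{2}$ reduces the inequality to $\|G^{*}D_Cc+\underline{A}a\|^{2}\le\|D_Cc\|^{2}+\|a\|^{2}$; writing $u=D_Cc$ and extending by density and by the vanishing of $G^{*}$ on $\mathcal{D}_C^{\perp}$, this becomes $\|G^{*}u+\underline{A}a\|^{2}\le\|u\|^{2}+\|a\|^{2}$ for all $u,a$, i.e.\ $[\,G^{*}\ \ \underline{A}\,]$ is a contraction. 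Hence $G^{*}G+\underline{A}\,\underline{A}^{*}\le I$, that is $G^{*}G\le D_{*,A}^{2}$. A second application of Douglas' lemma then factors $G=\gamma D_{*,A}$ for a contraction $\gamma\colon\mathcal{D}_{*,A}\to\mathcal{D}_C$, and therefore $\underline{B}^{*}=D_CG=D_C\gamma D_{*,A}$, as claimed.

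The main obstacle I anticipate is the coupling of $\underline{B}$ across both the off-diagonal and the lower-right entry of $I-\widetilde{E}\,\widetilde{E}^{*}$, which makes a direct appeal to the positive $2\times2$ block criterion circular. The device that breaks the circularity is to peel off the left factor first (via $\underline{B}^{*}=D_CG$) and only then recognize that the residual inequality collapses to the single row-contraction condition on $[\,G^{*}\ \ \underline{A}\,]$, which is precisely what supplies the right factor $D_{*,A}$. A secondary technical point is that $D_C$ and $D_{*,A}$ need not be injective, so I would rely on the kernel/range bookkeeping built into Douglas' lemma rather than on any inverses; this is also what guarantees that the source and target of $\gamma$ come out as $\mathcal{D}_{*,A}$ and $\mathcal{D}_C$.
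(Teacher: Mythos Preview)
The paper does not prove this proposition; it is quoted from \cite[Proposition~3.1]{DEY} and used as a black box throughout. There is therefore no in-paper argument to compare against.

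Your proof is correct and self-contained. The reshuffling to the $2\times2$ block $\widetilde{E}$, the reduction to the scalar inequality $(\dagger)$, and the two-pass use of Douglas' lemma are all sound. The key device---peeling off $D_C$ on the left first via $\underline{B}^{*}\underline{B}\le D_C^{2}$, then substituting $\underline{B}=G^{*}D_C$ back into $(\dagger)$ so that the residual inequality collapses to contractivity of $[\,G^{*}\ \ \underline{A}\,]$, whence $G^{*}G\le D_{*,A}^{2}$ supplies the right factor---is exactly the standard route to this result and breaks the apparent circularity you flag. The density argument for $u=D_Cc$ together with $G^{*}|_{\mathcal{D}_C^{\perp}}=0$ correctly extends the inequality to all $u$, and the range clause in Douglas' lemma is precisely what lands $\gamma$ on $\mathcal{D}_{*,A}\to\mathcal{D}_C$ rather than on the ambient spaces. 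No gaps.
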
 
			Following the terminology of \cite{DEY}, we say that the contraction $\gamma:\mathcal{D}_{*,A}\rightarrow \mathcal{D}_C$ is {\it resolving}, if for every $h\in \clh_A$, 
			\begin{equation}\label{eqn gamma}
				\gamma D_{*,A}A_{\alpha}^*h=0\,\,\,\,\forall\,\alpha\in\tilde{\Lambda}\text{ implies }D_{*,A}A_{\alpha}^*h=0\,\,\,\,\forall\,\alpha\in\tilde{\Lambda}.
			\end{equation}
			\begin{definition} \label{reduced}
				Let $\underline{C}$ be a row contraction. The contractive lifting $\underline{E}$ of $\underline{C}$  by $\underline{A}$ is called a {\it reduced lifting} if $\underline{A}$ is a c.n.c. row contraction and the function $\gamma$ in Proposition \ref{Prop lifting} is resolving.
			\end{definition}

			Let $\clh_1$ and $\clh_2$ be Hilbert spaces.  
			A bounded operator $M: \Gamma \otimes \clh_1 \to \Gamma \otimes \clh_2$ is said to be {\it multi-analytic} if 
			$$M (L_i \otimes I) = (L_i \otimes I) M,\text{ for }i=1, \ldots, d.$$ 
			Such an operator is determined by its {\it symbol} $\theta:= M|_{e_0 \otimes \clh_1}.$ 
			
			By the remark following \cite[Proposition 3.8]{DEY minimal}, it is important to note that the notions of reduced liftings and minimal contractive  liftings are same.  
			For a minimal contractive  lifting $\underline{E}$ of $\underline{C}$ by $\underline{A}$, 
			a characteristic function $M_{C,E}:\Gamma\otimes\mathcal{D}_E\rightarrow\Gamma\otimes\mathcal{D}_C$ with symbol $\theta_{C,E}:\mathcal{D}_E\rightarrow\Gamma\otimes\mathcal{D}_C$ is defined in Equations \eqref{chara fun deygohm1} and \eqref{chara fun deygohm}. 
		Let $\iota_j^{\clh}$ be the inclusion map from a Hilbert space $\clh$ to $\oplus_{i=1}^d \clh$ as the $j^{th}$ component.   Note that $\theta_{C,E}(e_0\otimes D_E \iota^{\clh_E}_j(h_a))=(I_{\Gamma}\otimes\gamma)\theta_{A}(e_0\otimes D_A \iota^{\clh_A}_j(h_a))$ for $h_a\in \clh_A$, where $\theta_{A} : \mathcal{D}_A \rightarrow\Gamma\otimes\mathcal{D}_{*,A}$ is the characteristic function of $\underline{A}$ and is given by
			\begin{equation} \label{POPchar}
				\theta_{A} (e_0\otimes D_A \iota^{\clh_A}_j(h_a))=-e_0\otimes \underline{A}D_A \iota^{\clh_A}_j(h_a) +\underset{j=1}{\overset{d}{\sum}}e_j\otimes\underset{\alpha}{\sum}e_{\alpha}\otimes  D_{*,A}A_{\alpha}^*P_jD_A^2 \iota^{\clh_A}_j(h_a),
			\end{equation}
			as in page 56 of \cite{POP2}. It was establised in \cite{POPconst} that the expression of $\theta_A$  in Equation (\ref{popescu chara fn}) is the Fourier expansion of Equation (\ref{POPchar}).

		First, we develop another expression for $\theta_{C,E}$ which would be useful  later in this article.
		\begin{lemma}\label{characteristic function lemma}
			Let $\underline{C}=(C_1,C_2,\ldots, C_d)$ be a row contraction on a Hilbert space $\clh_C$ and $\underline{E}=(E_1,E_2,\ldots, E_d)$ be a minimal contractive  lifting of $\underline{C}$ on a Hilbert space $\clh_E=\clh_C\oplus \clh_A$, where
			\[E_i=\begin{bmatrix}
				C_i&0\\
				B_i&A_i
			\end{bmatrix}\text{ for }1\leq i\leq d.\]
			Let $\gamma:\mathcal{D}_{*,A}\rightarrow \mathcal{D}_C$ be a contraction such that $\underline{B}^*=D_C\gamma D_{*,A}.$ Then 
			\begin{align*}
				\theta_{C,E}=\begin{bmatrix}
					D_{*,\gamma}&(I_{\Gamma}\otimes\gamma)\theta_A
				\end{bmatrix}
				{\sigma},
			\end{align*}
			where $\sigma:\mathcal{D}_E\rightarrow\mathcal{D}_{*,\gamma}\oplus\mathcal{D}_A$ is a unitary operator defined by 
			\begin{align}\label{eqn for sigma}
				\sigma D_E=\begin{bmatrix}
					D_{*,\gamma}D_C&0\\
					-\underline{A}^*\gamma^*D_C&D_A
				\end{bmatrix}.
			\end{align}
		\end{lemma}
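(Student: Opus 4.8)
The plan is to verify the claimed factorization $\theta_{C,E} = \begin{bmatrix} D_{*,\gamma} & (I_\Gamma \otimes \gamma)\theta_A \end{bmatrix} \sigma$ by checking it on the two natural pieces of $\mathcal{D}_E$, namely the images of $D_E i^{H_E}_j(h_c)$ for $h_c \in H_C$ and of $D_E i^{H_E}_j(h_a)$ for $h_a \in H_A$, using the explicit formulas for $\theta_{C,E}$ recalled just before the lemma. First I would record what the proposed $\sigma$ does on each piece: from \eqref{eqn for sigma}, applied to $i^{H_E}_j(h_c)$ and $i^{H_E}_j(h_a)$, one reads off the $\mathcal{D}_{*,\gamma}$- and $\mathcal{D}_A$-components of $\sigma D_E$ acting on vectors supported in the $H_C$- and $H_A$-slots respectively. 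The block structure says that on $H_C$-vectors $\sigma$ produces $(D_{*,\gamma}D_C h_c,\ -\underline{A}^*\gamma^* D_C h_c)$ and on $H_A$-vectors it produces $(0,\ D_A i^{H_A}_j(h_a))$ (after identifying the slots correctly). I would then apply $\begin{bmatrix} D_{*,\gamma} & (I_\Gamma\otimes\gamma)\theta_A \end{bmatrix}$ to each of these and match against the two displayed formulas for $\theta_{C,E}$.

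For the $H_A$-piece this should be essentially immediate: $\sigma$ kills the $\mathcal{D}_{*,\gamma}$-component and returns $D_A i^{H_A}_j(h_a)$ in the $\mathcal{D}_A$-slot, so the composite gives $(I_\Gamma\otimes\gamma)\theta_A(e_0\otimes D_A i^{H_A}_j(h_a))$, which is exactly the identity noted right after the definition of $\theta_{C,E}$. So the $H_A$-component of the lemma reduces to an already-stated fact. The real work is the $H_C$-piece. There I would compute
\[
D_{*,\gamma}\bigl(D_{*,\gamma} D_C h_c\bigr) + (I_\Gamma\otimes\gamma)\theta_A\bigl(-\underline{A}^*\gamma^* D_C h_c\bigr)
\]
and show it equals $e_0\otimes(D_C h_c - \gamma D_{*,A} B_j h_c) - \sum_{|\alpha|\geq 1} e_\alpha \otimes \gamma D_{*,A} A_\alpha^* B_j h_c$. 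The zeroth Fourier coefficient should come out as $D_{*,\gamma}^2 D_C h_c - \gamma \underline{A}\,\underline{A}^*\gamma^* D_C h_c$; using $D_{*,\gamma}^2 = I - \gamma\gamma^*$ (on $\mathcal{D}_C$) together with the relation $\underline{B}^* = D_C\gamma D_{*,A}$ from Proposition \ref{Prop lifting} and the defect identity $\underline{A}\,\underline{A}^* = I - D_{*,A}^2$, I expect the $\gamma\gamma^*$ terms to combine so that the constant term collapses to $D_C h_c - \gamma D_{*,A} B_j h_c$. For the higher coefficients, the $D_{*,\gamma}$ term contributes nothing beyond $e_0$, so only $(I_\Gamma\otimes\gamma)\theta_A$ evaluated on $-\underline{A}^*\gamma^* D_C h_c$ survives, and the series expansion of $\theta_A$ in \eqref{popescu chara fn} should reproduce $-\sum_{|\alpha|\geq 1} e_\alpha\otimes \gamma D_{*,A} A_\alpha^* B_j h_c$ after again substituting $B_j = (D_C\gamma D_{*,A})^*$-type relations and reindexing.

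The main obstacle, and the step I would be most careful with, is that the asserted $\sigma$ is only defined by the formula \eqref{eqn for sigma} on the range of $D_E$, so before using it I must check that $\sigma$ is genuinely a well-defined unitary from $\mathcal{D}_E$ onto $\mathcal{D}_{*,\gamma}\oplus\mathcal{D}_A$. Concretely this means verifying that the prescription preserves norms, i.e. $\|\sigma D_E x\|^2 = \|D_E x\|^2$ for all $x\in H_E^d$ (equivalently that the right-hand block matrix in \eqref{eqn for sigma} is a co-isometry onto the correct defect spaces), and that its range is dense in $\mathcal{D}_{*,\gamma}\oplus\mathcal{D}_A$. This is an identity among defect operators: expanding $\|\sigma D_E x\|^2 = \|D_{*,\gamma}D_C h_c\|^2 + \|\,{-}\underline{A}^*\gamma^* D_C h_c + D_A(\cdots)\,\|^2$ and comparing with $\|D_E x\|^2 = \|x\|^2 - \|\underline{E}\,x\|^2$, computed from the block form of $E_i$ and the constraint $\underline{B}^* = D_C\gamma D_{*,A}$, should yield equality after the defect relations $D_{*,\gamma}^2 = I-\gamma\gamma^*$, $D_A^2 = I - \underline{A}^*\underline{A}$, $D_{*,A}^2 = I-\underline{A}\,\underline{A}^*$ are invoked. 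Once $\sigma$ is known to be a unitary and the two component computations above are in hand, the factorization follows by linearity and density, completing the proof.
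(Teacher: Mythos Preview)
Your proposal is correct and follows essentially the same approach as the paper: verify the factorization separately on the $H_A$- and $H_C$-components of $\mathcal{D}_E$ via the block formula for $\sigma D_E$, with the $H_A$-piece reducing to the already-stated identity $\theta_{C,E}(e_0\otimes D_E i^{H_E}_j(h_a))=(I_\Gamma\otimes\gamma)\theta_A(e_0\otimes D_A i^{H_A}_j(h_a))$ and the $H_C$-piece handled by expanding with $D_{*,\gamma}^2=I-\gamma\gamma^*$, $D_{*,A}^2=I-\underline{A}\,\underline{A}^*$, and $\underline{B}=D_{*,A}\gamma^*D_C$. The only difference is that the paper does not verify the unitarity of $\sigma$ in situ but cites \cite{BDR} for it, whereas you propose to check the isometry identity directly; your outlined defect-operator computation is the right way to do this.
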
	
		\begin{proof} In \cite{BDR}, it is shown that the map $\sigma$ is a unitary.
			For $h_a\in \clh_A$ and $j=1, \ldots, d$, we have
			\begin{align*}
				\begin{bmatrix}
					D_{*,\gamma}&(I_{\Gamma}\otimes\gamma)\theta_A
				\end{bmatrix}{\sigma}  D_E \iota^{\clh_E}_j(h_a)
				=&\begin{bmatrix}
					D_{*,\gamma}&(I_{\Gamma}\otimes\gamma)\theta_A
				\end{bmatrix}\begin{bmatrix}
					D_{*,\gamma}D_C&0\\
					-\underline{A}^*\gamma^* D_C&D_A
				\end{bmatrix}\begin{bmatrix}
					0\\
					\iota^{\clh_A}_j(h_a)
				\end{bmatrix}\\
				=&\begin{bmatrix}
					D_{*,\gamma}&(I_{\Gamma}\otimes\gamma)\theta_A
				\end{bmatrix}
				\begin{bmatrix}
					0\\
					D_A \iota^{\clh_A}_j(h_a)
				\end{bmatrix}\\
				=&(I_{\Gamma}\otimes\gamma)\theta_A  D_A \iota^{\clh_A}_j(h_a)\\
				=&\theta_{C,E} D_E \iota^{\clh_E}_j(h_a).
			\end{align*}
			For $h_c\in \clh_C$  and $j=1, \ldots, d$, we have
			\begin{align*}
				\theta_{C,E} D_E \iota^{\clh_E}_j(h_c)
				&= D_C \iota^{\clh_C}_j(h_c)- \gamma D_{*,A}\underline{B} \iota^{\clh_C}_j(h_c)-\underset{|\alpha|\geq 1}{\sum}e_{\alpha}\otimes\gamma D_{*,A}A_{\alpha}^*\underline{B} \iota^{\clh_C}_j(h_c)\\
				&= D_C \iota^{\clh_C}_j(h_c)- \gamma D_{*,A}D_{*,A}\gamma^*D_C \iota^{\clh_C}_j(h_c)-\underset{|\alpha|\geq 1}{\sum}e_{\alpha}\otimes\gamma D_{*,A}A_{\alpha}^*D_{*,A}\gamma^*D_C \iota^{\clh_C}_j(h_c)\\
				&= D_C \iota^{\clh_C}_j(h_c)- \gamma (I-\underline{A}\underline{A}^*)\gamma^*D_C \iota^{\clh_C}_j(h_c)-\underset{|\alpha|\geq 1}{\sum}e_{\alpha}\otimes\gamma D_{*,A}A_{\alpha}^*D_{*,A}\gamma^*D_C \iota^{\clh_C}_j(h_c)\\
				&= (I-\gamma\gamma^*)D_C \iota^{\clh_C}_j(h_c)\\
				&\, \, \, \, -\left[- \gamma \underline{A}\underline{A}^*\gamma^*D_C \iota^{\clh_C}_j(h_c) +\underset{|\alpha|\geq 1}{\sum}e_{\alpha}\otimes\gamma D_{*,A}A_{\alpha}^*D_{*,A}\gamma^*D_C \iota^{\clh_C}_j(h_c)\right]\\
				&= D_{*,\gamma}^2D_C \iota^{\clh_C}_j(h_c)-\left[- \gamma \underline{A}+\underset{|\alpha|\geq 1}{\sum}e_{\alpha}\otimes\gamma D_{*,A}A_{\alpha-1}^*D_A \right](e_0\otimes\underline{A}^*\gamma^*D_C \iota^{\clh_C}_j(h_c))\\
				&= D_{*,\gamma}^2D_C \iota^{\clh_C}_j(h_c)-(I_{\Gamma}\otimes \gamma) \theta_A\underline{A}^*\gamma^*D_C \iota^{\clh_C}_j(h_c).
			\end{align*}
			Therefore, we obtain
			\begin{align*}
				\begin{bmatrix}
					D_{*,\gamma}&(I_{\Gamma}\otimes\gamma)\theta_A\end{bmatrix}{\sigma} D_E \iota^{\clh_E}_j(h_c)=&\begin{bmatrix}
					D_{*,\gamma}&(I_{\Gamma}\otimes\gamma)\theta_A	\end{bmatrix}\begin{bmatrix}
					D_{*,\gamma}D_C&0\\
					-\underline{A}^*\gamma^*D_C&D_A
				\end{bmatrix}
				\begin{bmatrix}
					\iota^{\clh_C}_j(h_c)\\0
				\end{bmatrix}\\
				=&\begin{bmatrix}
					D_{*,\gamma}&(I_{\Gamma}\otimes\gamma)\theta_A	\end{bmatrix}\begin{bmatrix}
					D_{*,\gamma}D_C \iota^{\clh_C}_j(h_c)\\
					-\underline{A}^*\gamma^*D_C \iota^{\clh_C}_j(h_c)
				\end{bmatrix}\\
				=& D_{*,\gamma}^2D_C \iota^{\clh_C}_j(h_c)-(I_{\Gamma}\otimes\gamma)\theta_A \underline{A}^*\gamma^*D_C \iota^{\clh_C}_j(h_c)\\
				=&\theta_{C,E} D_E \iota^{\clh_E}_j(h_c).
			\end{align*}
			This completes the proof.
		\end{proof}
		The following result illustrates that the characteristic function $\theta_{C,E}$ can also be viewed as transfer function for some colligation matrix.
		\begin{lemma}\label{lemma characteristic fun of colligation}
			Let $\underline{C}=(C_1,C_2,\ldots, C_d)$ be a row contraction on a Hilbert space $\clh_C$ and $\underline{E}=(E_1,E_2,\ldots, E_d)$ be a minimal contractive lifting of $\underline{C}$ on a Hilbert space $\clh_E=\clh_C\oplus \clh_A$, where
			\[E_i=\begin{bmatrix}
				C_i&0\\
				B_i&A_i
			\end{bmatrix}\text{ for }1\leq i\leq d.\] 
			Let $\gamma$ and $\sigma$ be as in Lemma \ref{eqn for sigma}. Then the characteristic function $\theta_{C,E}$ of the lifting $\underline{E}$ of $\underline{C}$ can be realised as the {transfer} function of the following colligation matrix:
			\[
			V=\begin{bmatrix}
				\underline{A}^*& D_AP_{D_A}\sigma\\
				\gamma D_{*,A}& (D_{*,\gamma}P_{D_{*,\gamma}}\sigma-\gamma \underline{A}P_{D_A}\sigma)
			\end{bmatrix}: \clh_A \oplus \mathcal{D}_E \to \clh^d_A \oplus \mathcal{D}_C.\]
				\end{lemma}
				\begin{proof}
					By Lemma \ref{characteristic function lemma}, note that 
					\begin{align*}
						\theta_{C,E}=&\begin{bmatrix}
							D_{*,\gamma}&(I_{\Gamma}\otimes\gamma) \theta_A
						\end{bmatrix}{\begin{bmatrix}
								P_{D_{*,\gamma}}\sigma\\
								P_{D_A}\sigma
						\end{bmatrix}}\\
						=& D_{*,\gamma}P_{D_{*,\gamma}}\sigma+(I_{\Gamma}\otimes\gamma)\theta_A{ P_{D_A}\sigma}
					\end{align*}
					{Thus, the Fourier expansion of $\theta_{C,E}$ is }
					\begin{align*}
						\theta_{C,E}=& D_{*,\gamma}P_{D_{*,\gamma}}\sigma-\gamma \underline{A}P_{D_A}\sigma+(I_{\Gamma}\otimes\gamma D_{*,A})({I_{\Gamma\otimes \clh_A}}-(\underline{R}\otimes I_{\Gamma\otimes \clh_A})(I_{\Gamma} \otimes \underline{A}^*))^{-1}\\
						&(\underline{R} \otimes I_{\clh_A})(I_{\Gamma} \otimes D_AP_{D_A}\sigma).
					\end{align*}
					We know that $$ D_{*,\gamma}P_{D_{*,\gamma}}\sigma-\gamma \underline{A}P_{D_A}\sigma+(I_{\Gamma}\otimes\gamma D_{*,A})({I_{\Gamma\otimes \clh_A}}-(\underline{R}  \otimes I_{\Gamma\otimes \clh_A})(I_{\Gamma} \otimes \underline{A}^*))^{-1}(\underline{R} \otimes I_{\clh_A})(I_{\Gamma} \otimes D_AP_{D_A}\sigma)$$ is the transfer function of the colligation
					{\begin{align*}
							V=\begin{bmatrix}
								\underline{A}^*& D_AP_{D_A}\sigma\\
								\gamma D_{*,A}& (D_{*,\gamma}P_{D_{*,\gamma}}\sigma-\gamma \underline{A}P_{D_A}\sigma)
							\end{bmatrix}.
							\end{align*}}
							This proves the result.
						\end{proof}
						
						Now, we observe some interesting facts about the colligation matrix obtained in Lemma \ref{lemma characteristic fun of colligation}.
						\begin{lemma}\label{lemma co-isometric colligation}
							Let $\underline{C},\underline{A},\underline{E}, \gamma$ and $\sigma$ be as defined in Lemma \ref{lemma characteristic fun of colligation}. Then the colligation matrix
							{\[
								V=\begin{bmatrix}
									\underline{A}^*& D_AP_{D_A}\sigma\\
									\gamma D_{*,A}& (D_{*,\gamma}P_{D_{*,\gamma}}\sigma-\gamma \underline{A}P_{D_A}\sigma)
								\end{bmatrix}.\]}
									is observable and co-isometric.
								\end{lemma}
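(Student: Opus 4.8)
The plan is to establish the two assertions separately: observability from the facts that $\gamma$ is resolving and $\underline{A}$ is c.n.c., and the co-isometry property by a direct block computation of $VV^{*}$.

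First I would treat observability. Since the basic operator of $V$ is $\underline{A}^{*}$ and the block carrying the state space into the output space is $\gamma D_{*,A}$, the associated observability operator is
\[
\mathcal{O} = \gamma D_{*,A}\bigl(I_{\Gamma\otimes H_A} - (\underline{R}\otimes I_{H_A})(I_{\Gamma}\otimes\underline{A}^{*})\bigr)^{-1}\colon H_A\to\Gamma\otimes\mathcal{D}_C .
\]
Expanding the resolvent as a Neumann series and applying it to $h\in H_A$ gives $\mathcal{O}h=\sum_{\alpha\in\tilde{\Lambda}}e_{\alpha}\otimes\gamma D_{*,A}A_{\alpha}^{*}h$, so $\mathcal{O}h=0$ forces $\gamma D_{*,A}A_{\alpha}^{*}h=0$ for every $\alpha\in\tilde{\Lambda}$. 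Because $\underline{E}$ is a minimal (equivalently reduced) lifting, $\gamma$ is resolving, whence $D_{*,A}A_{\alpha}^{*}h=0$ for all $\alpha$; taking $\alpha=0$ and using that $D_{*,A}$ is injective (as $\underline{A}$ is c.n.c.) yields $h=0$. Thus $\mathcal{O}$ is injective and $V$ is observable.

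For the co-isometry property I would set $B=D_AP_{D_A}\sigma$, $C=\gamma D_{*,A}$ and $D=D_{*,\gamma}P_{D_{*,\gamma}}\sigma-\gamma\underline{A}P_{D_A}\sigma$, and split $\sigma$ into its components $\sigma_1=P_{D_{*,\gamma}}\sigma$ and $\sigma_2=P_{D_A}\sigma$. Since $\sigma$ is unitary onto $\mathcal{D}_{*,\gamma}\oplus\mathcal{D}_A$, these satisfy $\sigma_1\sigma_1^{*}=I$, $\sigma_2\sigma_2^{*}=I$ and $\sigma_1\sigma_2^{*}=\sigma_2\sigma_1^{*}=0$. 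Proving $VV^{*}=I$ then amounts to the three block identities
\[
\underline{A}^{*}\underline{A}+BB^{*}=I,\qquad \gamma D_{*,A}\underline{A}+DB^{*}=0,\qquad CC^{*}+DD^{*}=I,
\]
the off-diagonal blocks being adjoints of one another. Feeding the relations for $\sigma_1,\sigma_2$ into these blocks collapses each to a defect-operator identity. The first follows from $BB^{*}=D_A\sigma_2\sigma_2^{*}D_A=D_A^{2}=I-\underline{A}^{*}\underline{A}$. For the second, the intertwining relations $\underline{A}D_A=D_{*,A}\underline{A}$ and $\underline{A}^{*}D_{*,A}=D_A\underline{A}^{*}$ give $\gamma D_{*,A}\underline{A}=\gamma\underline{A}D_A$ and $DB^{*}=-\gamma\underline{A}\sigma_2\sigma_2^{*}D_A=-\gamma\underline{A}D_A$, which cancel. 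For the third, the cross terms vanish by $\sigma_1\sigma_2^{*}=0$, leaving
\[
CC^{*}+DD^{*}=\gamma D_{*,A}^{2}\gamma^{*}+D_{*,\gamma}^{2}+\gamma\underline{A}\underline{A}^{*}\gamma^{*}=\gamma(D_{*,A}^{2}+\underline{A}\underline{A}^{*})\gamma^{*}+D_{*,\gamma}^{2}=\gamma\gamma^{*}+(I-\gamma\gamma^{*})=I,
\]
using $D_{*,A}^{2}+\underline{A}\underline{A}^{*}=I$ and $D_{*,\gamma}^{2}=I-\gamma\gamma^{*}$.

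The step I expect to require the most care is the bookkeeping of domains across the projections $P_{D_A}$ and $P_{D_{*,\gamma}}$: one must check that $\underline{A}^{*}$ carries $\mathcal{D}_{*,A}$ into $\mathcal{D}_A$ and $\underline{A}$ carries $\mathcal{D}_A$ into $\mathcal{D}_{*,A}$ (both consequences of the intertwining relations) so that the idempotents $\sigma_2\sigma_2^{*}=I_{\mathcal{D}_A}$ and $\sigma_1\sigma_1^{*}=I_{\mathcal{D}_{*,\gamma}}$ genuinely act as identities at the places where they occur. Once this is in place, the three identities above give $VV^{*}=I$, and together with observability this proves that $V$ is an observable, co-isometric colligation.
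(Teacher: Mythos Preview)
Your proposal is correct and follows essentially the same route as the paper: observability is proved identically (Neumann-series expansion of $\mathcal{O}$, then resolving $\gamma$ plus c.n.c.\ $\underline{A}$), and the co-isometry check reduces to the same three defect-operator identities. The only cosmetic difference is that the paper first factors
\[
V=\begin{bmatrix}\underline{A}^{*}&0&D_A\\ \gamma D_{*,A}&D_{*,\gamma}&-\gamma\underline{A}\end{bmatrix}\begin{bmatrix}I_{H_A}&0\\0&\sigma\end{bmatrix},
\]
so that $\sigma\sigma^{*}=I$ disposes of the unitary in one stroke, whereas you handle the relations $\sigma_i\sigma_j^{*}$ block by block; the resulting computations are the same.
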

								\begin{proof}
									Note that 
									\begin{align*}
										V=&\begin{bmatrix}
											\underline{A}^*& D_AP_{D_A}\sigma\\
											\gamma D_{*,A}& (D_{*,\gamma}P_{D_{*,\gamma}}\sigma-\gamma \underline{A}P_{D_A}\sigma)
										\end{bmatrix}\\
										=&\begin{bmatrix}
											\underline{A}^*&0& D_A\\
											\gamma D_{*,A}& D_{*,\gamma}&- \gamma \underline{A}
										\end{bmatrix}\begin{bmatrix}
											I_{ \clh_A}&0\\
											0& P_{D_{*,\gamma}}\sigma\\
											0& P_{D_A}\sigma
										\end{bmatrix}\\
										=&\begin{bmatrix}
											\underline{A}^*&0& D_A\\
											\gamma D_{*,A}& D_{*,\gamma}&- \gamma \underline{A}
										\end{bmatrix}\begin{bmatrix}
											I_{ \clh_A}&0\\
											0& \sigma
										\end{bmatrix}.
									\end{align*}
									Then
									{\begin{align*}
											VV^*=&\begin{bmatrix}
												\underline{A}^*&0& D_A\\
												\gamma D_{*,A}& D_{*,\gamma}&- \gamma \underline{A}
											\end{bmatrix}\begin{bmatrix}
												I_{ \clh_A}&0\\
												0& \sigma
											\end{bmatrix}\begin{bmatrix}
												I_{ \clh_A}&0\\
												0& \sigma^*
											\end{bmatrix}\begin{bmatrix}
												\underline{A}& D_{*,A}\gamma^*  \\
												0& D_{*,\gamma}\\
												D_A&-  \underline{A}^*\gamma^*
											\end{bmatrix}\\
											=&\begin{bmatrix}
												\underline{A}^*&0& D_A\\
												\gamma D_{*,A}& D_{*,\gamma}&- \gamma \underline{A}
											\end{bmatrix}\begin{bmatrix}
												\underline{A}& D_{*,A}\gamma^*  \\
												0& D_{*,\gamma}\\
												D_A&-  \underline{A}^*\gamma^*
											\end{bmatrix}\\
											=&\begin{bmatrix}
												(\underline{A}^*\underline{A}+D_A^2)&(\underline{A}^*D_{*,A}\gamma^*-D_A\underline{A}^*\gamma^*)\\
												(\gamma D_{*,A}\underline{A}-\gamma\underline{A}D_A)&(\gamma D_{*,A}^2\gamma^*+\gamma \underline{A}\underline{A}^*\gamma^*+D_{*,\gamma}^2)
											\end{bmatrix}\\
											=&\begin{bmatrix}
												I_{ \clh_A}&0\\
												0&I_{ \clh_C}
											\end{bmatrix}.
									\end{align*}}
									This implies $V$ is co-isometric.
									
									To show $V$ is observable, we need to show that $\mathcal{O}_{\gamma D_{*,A},A^*}$ is injective, {where $\mathcal{O}_{\gamma D_{*,A},A^*}:\clh_A\rightarrow\Gamma\otimes\mathcal{D}_C$ is defined by $\mathcal{O}_{\gamma D_{*,A},A^*}x=\gamma D_{*,A}\left(\underset{\alpha}{\sum}e_{\alpha}\otimes A_{\alpha}^*x\right)$}. Suppose $\mathcal{O}_{\gamma D_{*,A},A^*}( x)=0$ for some $ x\in \clh_A$. Then
									{\begin{align*}
											\gamma D_{*,A}\left(\underset{\alpha}{\sum}e_{\alpha}\otimes A_{\alpha}^* x\right)=0.
									\end{align*}}
									This is equivalent to say that {$$\underset{\alpha}{\sum}e_{\alpha}\otimes(\gamma D_{*,A}A_{\alpha}^*x)=0.$$ 
										As a result, $\gamma D_{*,A}A_{\alpha}^*x=0$ for every $\alpha\in \tilde{\Lambda}$.} We know that $\gamma$ is resolving, thus $\gamma D_{*,A}A_{\alpha}^*x=0$ for every $\alpha\in{\tilde{\Lambda}}$ implies $D_{*,A}A_{\alpha}^*x=0$ for every $\alpha$. In particular, if we take $|\alpha|=0$, then $D_{*,A}x=0$. Because $A$ is a c.n.c. row contraction,  $\mathcal{O}_{\gamma D_{*,A},A^*}$ is an injective operator (see Lemma \ref{lemma sec 3}).
								\end{proof}
								\section{Characteristic functions of liftings transformed by a Blaschke factor}
								If $T$ is a contraction on  a Hilbert space $\clh$, then for any $a$ in the open unit disc $\mathbb{D}$, the operator $T_a := (T-aI)(I-\overline{a}T)^{-1}$ is also a contraction on $\clh$. Define the contraction
								\begin{equation}\label{eqn of S_T}
									S_T :=(1-|a|^2)^{1/2}(I-\overline{a}T)^{-1}.
								\end{equation}  According to equations $(1.8)$ on page 246 of \cite{NAGY}, there exist unitaries $Z_T:\cld_{T_a}\rightarrow\cld_{T}$ and $Z_{*,T}:\cld_{*,T_a}\rightarrow\cld_{*,T}$ such that 
								\begin{align}\label{eqn for Z}
									Z_TD_{T_a} &= D_TS_T\\
									\label{eqn Z*}Z_{*,T}D_{*,T_a} &= D_{*,T}S^*_{T}.
								\end{align}
								Let $\theta_{T_a}$ and $\theta_{T}$ be the symbols of characteristic functions of $T_a$ and $T$ respectively. It is also shown on page 247 of \cite{NAGY} that 
								\begin{equation}\label{eqn for cf}
									Z_{*,T}\theta_{T_a}(\lambda)Z^{-1}_T = \theta_T(\mu)\text{ for } \lambda\in\mathbb{D},
								\end{equation}	
								where  $\mu = \displaystyle \frac{\lambda+a}{1+\overline{a}\lambda}.$ 
								\begin{theorem}\label{thm1} Let   
									$E = \begin{bmatrix}
										C&0\\
										B&A
									\end{bmatrix}$ be a minimal contractive lifting of $C$ on a Hilbert space $\clh_E.$ Then for $a\in\mathbb{D}$, the operator $E_a = (E-aI)(I-\overline{a}E)^{-1}$ is a minimal contractive lifting of $C_a$ and satisfies
									\[
									E_a = \begin{bmatrix}
										C_a &0\\
										S_ABS_C & A_a
									\end{bmatrix}
									\]
									where 
									$S_C = (1-|a|^2)^{1/2}(I-\overline{a}C)^{-1}$ and $S_A = (1-|a|^2)^{1/2}(I-\overline{a}A)^{-1}$.
								\end{theorem}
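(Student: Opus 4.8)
The plan is to split the statement into two independent tasks: confirming the explicit block form of $E_a$ by a direct computation, and then verifying that $E_a$ is again a \emph{minimal} contractive lifting. Contractivity requires no work here: since $E$ is a contraction and $a\in\mathbb{D}$, the operator M\"obius transform $E_a=(E-aI)(I-\overline{a}E)^{-1}$ is automatically a contraction on $H_E$, by the classical fact recalled at the beginning of this section (cf. \cite{NAGY}). So the real content lies in the matrix identity and in minimality.

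For the block form I would exploit that $E$ is lower triangular with respect to $H_E=H_C\oplus H_A$. Then $I-\overline{a}E$ is lower triangular with diagonal blocks $I-\overline{a}C$, $I-\overline{a}A$ and off-diagonal block $-\overline{a}B$; since $\|\overline{a}E\|\le|a|<1$ it is invertible, and the standard formula for inverting a lower-triangular block matrix gives $(I-\overline{a}E)^{-1}$ lower triangular with diagonal blocks $(I-\overline{a}C)^{-1}$, $(I-\overline{a}A)^{-1}$ and $(2,1)$-block $\overline{a}(I-\overline{a}A)^{-1}B(I-\overline{a}C)^{-1}$. Multiplying on the left by $E-aI$, the diagonal entries collapse to $C_a$ and $A_a$ and the $(1,2)$-entry remains $0$. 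The only step needing care is the $(2,1)$-entry, which factors on the right as $[\,I+\overline{a}(A-aI)(I-\overline{a}A)^{-1}\,]B(I-\overline{a}C)^{-1}$; the bracket simplifies via the scalar identity $(I-\overline{a}A)+\overline{a}(A-aI)=(1-|a|^2)I$ to $(1-|a|^2)(I-\overline{a}A)^{-1}$. Hence the $(2,1)$-entry equals $(1-|a|^2)(I-\overline{a}A)^{-1}B(I-\overline{a}C)^{-1}=S_ABS_C$, which is exactly the claimed matrix. In particular $E_a$ is lower triangular with diagonal blocks $C_a$ and $A_a$, so it is a contractive lifting of $C_a$ by $A_a$.

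The main obstacle is minimality, and I would handle it through the lattice of closed invariant subspaces rather than through any functional model. The key claim is that a closed subspace $\mathcal{M}\subseteq H_E$ is invariant under $E$ if and only if it is invariant under $E_a$. Indeed, if $E\mathcal{M}\subseteq\mathcal{M}$ then, because $\|\overline{a}E\|<1$, the Neumann series $(I-\overline{a}E)^{-1}=\sum_{n\ge0}(\overline{a}E)^n$ converges in operator norm and each partial sum preserves the closed subspace $\mathcal{M}$, so $(I-\overline{a}E)^{-1}\mathcal{M}\subseteq\mathcal{M}$; combined with $(E-aI)\mathcal{M}\subseteq\mathcal{M}$ this gives $E_a\mathcal{M}\subseteq\mathcal{M}$. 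For the converse I would use that the transform is involutive, $E=(E_a)_{-a}$ with $-a\in\mathbb{D}$, so the identical argument applied to $E_a$ returns $E$-invariance of $\mathcal{M}$.

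Consequently $E$ and $E_a$ share precisely the same closed invariant subspaces, so the smallest closed invariant subspace containing $H_C$ is the same for both operators. Since $E$ is minimal this subspace is all of $H_E$, whence $\overline{span}\{(E_a)_{\alpha}x: x\in H_C,\ \alpha\in\tilde{\Lambda}\}=H_E=H_{E_a}$, i.e. $E_a$ is a minimal lifting of $C_a$. (In the present single-operator setting the words $(E_a)_{\alpha}$ are just the powers $E_a^{\,n}$, so the invariant-subspace description is literally the minimality condition.) This completes the proof.
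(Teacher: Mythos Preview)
Your proof is correct. The block-form computation is identical to the paper's, including the simplification of the $(2,1)$-entry via $(I-\overline{a}A)+\overline{a}(A-aI)=(1-|a|^2)I$.

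For minimality the paper takes a slightly different but closely related route: it expands $\bigl(\tfrac{\lambda-a}{1-\overline{a}\lambda}\bigr)^n$ as a Taylor series with radius of convergence greater than $1$ and applies it to $E$ to obtain $E_a^{\,n}=\sum_{v\ge0}c_v(a,n)E^v$, whence $\bigvee_n E_a^{\,n}H_C\subseteq\bigvee_n E^nH_C$; the reverse inclusion then follows from the involution $(E_a)_{-a}=E$, exactly as you use it. Your argument packages the same idea more cleanly at the level of the invariant-subspace lattice: one Neumann series suffices to see that $E_a$ lies in the closed unital algebra generated by $E$, so $E$ and $E_a$ have identical closed invariant subspaces. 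This avoids the Taylor expansion of each power $E_a^{\,n}$ and makes the equivalence of minimality with ``smallest closed invariant subspace containing $H_C$'' explicit. Both arguments are specific to the single-operator setting of this section, as you note.
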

								\begin{proof}
									By Equations (\ref{eqn for Z}) and (\ref{eqn Z*}), there exist unitaries $Z_C: \cld_{C_a}\rightarrow\cld_C$, $Z_{*,C}: \cld_{*,C_a}\rightarrow\cld_{*,C}$, $Z_A: \cld_{A_a}\rightarrow\cld_A$ and $Z_{*,A}: \cld_{*,A_a}\rightarrow\cld_{*,A}$ such that
									\begin{align}\label{eqn11}
										Z_CD_{C_a} &= D_CS_C, \,\,\:\:\:Z_{*,C}D_{*,C_a} = D_{*,C}{S^*_C},\\
										Z_AD_{A_a} &= D_AS_A \:\:\text{and}\:\:Z_{*,A}D_{*,A_a} = D_{*,A}{S^*_A}.
									\end{align}
									Therefore
									\begin{align*}
										E_a &= (E-aI)(I-\overline{a}E)^{-1}\\
										&=\begin{bmatrix}
											C-aI&0\\B&A-aI
										\end{bmatrix}
										\begin{bmatrix}
											(I-\overline{a}C)^{-1}&0\\(I-\overline{a}A)^{-1}\overline{a}B(I - \overline{a}C)^{-1}&(I-\overline{a}A)^{-1}
										\end{bmatrix}\\
										&=\begin{bmatrix}
											(C-aI)(I-\overline{a}C)^{-1}&0\\B(I-\overline{a}C)^{-1}+(A-aI)(I-\overline{a}A)^{-1}\overline{a}B(I-\overline{a}C)^{-1}&(A-aI)(I-\overline{a}A)^{-1}
										\end{bmatrix}
										\\		&=\begin{bmatrix}
											C_a & 0\\\tilde{B}&A_a
										\end{bmatrix},
									\end{align*}
									where
									\begin{align*}
										\tilde{B} &= B(I-\overline{a}C)^{-1}+(A-aI)(I-\overline{a}A)^{-1}\overline{a}B(I-\overline{a}C)^{-1}\\
										&=[I+\overline{a}(A-aI)(I-\overline{a}A)^{-1}]B(I-\overline{a}C)^{-1}\\
										&=[(I-\overline{a}A)+\overline{a}(A-aI)](I-\overline{a}A)^{-1}B(I-\overline{a}C)^{-1}\\
										&=(I-\overline{a}A+\overline{a}A-|a|^2I)(I-\overline{a}A)^{-1}B(I-\overline{a}C)^{-1}\\
										&=(1-|a|^2)(I-\overline{a}A)^{-1}B(I-\overline{a}C)^{-1}\\
										&=S_ABS_C.
									\end{align*}
									Hence $E_a = \begin{bmatrix}
										C_a &0\\
										S_ABS_C & A_a
									\end{bmatrix}$ is a contractive lifting of $C_a$. Since $\left(\displaystyle\frac{\lambda-a}{1-\bar{a}\lambda}\right)^n$ has the Taylor series expansion $$\left(\frac{\lambda-a}{1-\bar{a}\lambda}\right)^n = \underset{v=0}{\overset{\infty}{\sum}}c_v(a,n)\lambda^v$$ with radius of convergence greater than 1, we get
									$${E^n_a}= \sum_{v=0}^{\infty}c_v(a,n)E^v.$$
									This implies
									\begin{align*}
										\bigvee_{n=0}^\infty E^n_a&{\clh_C}\subset \bigvee_{n=0}^\infty E^n{\clh_C}
									\end{align*}	
where $\displaystyle \bigvee_{n=0}^\infty$ stand for closed linear span.						
									Also, using the fact that $({E_a})_{-a} = E$ for  $a\in\mathbb{D},$ we obtain $\displaystyle \bigvee_{n=0}^\infty E^n_a\clh_C= \bigvee_{n=0}^\infty E^n{\clh_C}=\clh_{E}$. Hence $E_a$ is a minimal contractive lifting of $C_a$.  
								\end{proof}

								\begin{theorem} \label{thm2}
									Let $C$ be a contraction on a Hilbert space $\clh_C.$ Let  $E = \begin{bmatrix}
										C&0\\
										B&A
									\end{bmatrix}$ be a minimal contractive lifting
									of $C$. Then for $a\in\mathbb{D}$ the symbol $\theta_{C_a,E_a}(\lambda)$ of the characteristic function of lifting $E_a$ coincides with $\theta_{C,E}(\mu)$, where $\mu= \displaystyle\frac{\lambda+a}{1+\overline{a}\lambda}$. 
								\end{theorem}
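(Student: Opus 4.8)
The plan is to transport the decomposition of Lemma \ref{characteristic function lemma}, applied to the lifting $E_a$, back to the data of $E$ by means of the unitaries from Equations (\ref{eqn for Z})--(\ref{eqn Z*}) together with the transformation law (\ref{eqn for cf}). By Theorem \ref{thm1}, $E_a=\begin{bmatrix}C_a&0\\ S_ABS_C&A_a\end{bmatrix}$ is a minimal contractive lifting of $C_a$ (and $A_a$ is c.n.c. since $A$ is), so its characteristic function is defined; writing $\gamma_a\colon\cld_{*,A_a}\to\cld_{C_a}$ for the resolving contraction of $E_a$ and $\sigma_a\colon\cld_{E_a}\to\cld_{*,\gamma_a}\oplus\cld_{A_a}$ for the unitary of Equation (\ref{eqn for sigma}) attached to $E_a$, Lemma \ref{characteristic function lemma} gives $\theta_{C_a,E_a}(\lambda)=\begin{bmatrix}D_{*,\gamma_a}&\gamma_a\theta_{A_a}(\lambda)\end{bmatrix}\sigma_a$. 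Throughout set $\mu=\frac{\lambda+a}{1+\overline{a}\lambda}$, which lies in $\mathbb{D}$ whenever $\lambda\in\mathbb{D}$.

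First I would identify $\gamma_a$ in terms of $\gamma$. The $(2,1)$-block of $E_a$ is $S_ABS_C$, so, using $B^*=D_C\gamma D_{*,A}$ from Proposition \ref{Prop lifting}, its adjoint equals $S_C^*D_C\,\gamma\,D_{*,A}S_A^*$. Taking adjoints in (\ref{eqn for Z}) gives $S_C^*D_C=D_{C_a}Z_C^*$, while (\ref{eqn Z*}) gives $D_{*,A}S_A^*=Z_{*A}D_{*,A_a}$; substituting, the $(2,1)$-block of $E_a^*$ factors as $D_{C_a}(Z_C^*\gamma Z_{*A})D_{*,A_a}$. By the uniqueness of the resolving contraction attached to a reduced lifting, this forces $\gamma_a=Z_C^*\gamma Z_{*A}$. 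Since $Z_{*A}$ is unitary, $\gamma_a\gamma_a^*=Z_C^*\gamma\gamma^*Z_C$, hence $D_{*,\gamma_a}=Z_C^*D_{*,\gamma}Z_C$ and $\cld_{*,\gamma_a}=Z_C^*\cld_{*,\gamma}$; in particular $W_1:=Z_C|_{\cld_{*,\gamma_a}}\colon\cld_{*,\gamma_a}\to\cld_{*,\gamma}$ is unitary.

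Next I would left-multiply the formula for $\theta_{C_a,E_a}(\lambda)$ by $Z_C$ and simplify entrywise. The first block becomes $Z_CD_{*,\gamma_a}=D_{*,\gamma}W_1$, while the second becomes $Z_C\gamma_a\theta_{A_a}(\lambda)=\gamma Z_{*A}\theta_{A_a}(\lambda)=\gamma\theta_A(\mu)Z_A$, where the last equality is (\ref{eqn for cf}) rewritten as $Z_{*A}\theta_{A_a}(\lambda)=\theta_A(\mu)Z_A$. Collecting the two blocks yields $Z_C\theta_{C_a,E_a}(\lambda)=\begin{bmatrix}D_{*,\gamma}&\gamma\theta_A(\mu)\end{bmatrix}\Psi\sigma_a$, where $\Psi=\begin{bmatrix}W_1&0\\0&Z_A\end{bmatrix}$ is a unitary from $\cld_{*,\gamma_a}\oplus\cld_{A_a}$ onto $\cld_{*,\gamma}\oplus\cld_A$. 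Since Lemma \ref{characteristic function lemma} applied to $E$ reads $\theta_{C,E}(\mu)=\begin{bmatrix}D_{*,\gamma}&\gamma\theta_A(\mu)\end{bmatrix}\sigma$, the row operator above equals $\theta_{C,E}(\mu)\sigma^*$, and therefore $Z_C\theta_{C_a,E_a}(\lambda)=\theta_{C,E}(\mu)\,\sigma^*\Psi\sigma_a$.

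Finally I would set $U:=\sigma^*\Psi\sigma_a\colon\cld_{E_a}\to\cld_E$; being a composition of unitaries it is unitary and independent of $\lambda$. Then $Z_C\theta_{C_a,E_a}(\lambda)=\theta_{C,E}(\mu)U$ for every $\lambda\in\mathbb{D}$, which is exactly the statement that $\theta_{C_a,E_a}(\lambda)$ coincides with $\theta_{C,E}(\mu)$, with output unitary $Z_C\colon\cld_{C_a}\to\cld_C$ and input unitary $U$. I expect the only genuine obstacle to be the identification $\gamma_a=Z_C^*\gamma Z_{*A}$ in the second step, as it rests on the adjoint intertwining identities and on the uniqueness of the resolving contraction; once this and the transformation law (\ref{eqn for cf}) are in place the remainder is formal, the decisive point being that $\sigma^*\Psi\sigma_a$ is automatically unitary, so the model unitaries $\sigma$ and $\sigma_a$ never have to be compared directly.
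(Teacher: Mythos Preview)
Your proof is correct and follows essentially the same route as the paper's: both identify $\gamma_a=Z_C^*\gamma Z_{*A}$ from the $(2,1)$-block of $E_a$ via the intertwining relations (\ref{eqn for Z})--(\ref{eqn Z*}) and the c.n.c.\ property of $A_a$, then use (\ref{eqn for cf}) together with Lemma \ref{characteristic function lemma} to rewrite $\theta_{C_a,E_a}(\lambda)$ as $Z_C^*\theta_{C,E}(\mu)$ composed with the unitary $\sigma^{-1}\begin{bmatrix}Z_C&0\\0&Z_A\end{bmatrix}\sigma_a$ on the input side. Your $W_1=Z_C|_{\cld_{*,\gamma_a}}$ is exactly the restriction the paper implicitly uses in its block matrix, and your $U=\sigma^*\Psi\sigma_a$ is the paper's input unitary; the only cosmetic difference is that the paper carries the factor $D_{E_a}$ throughout and works with $\sigma_aD_{E_a}$ in matrix form rather than with $\sigma_a$ directly.
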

								\begin{proof}
									By Theorem \ref{thm1}, we know that $E_a$ is a minimal contractive lifting of $C_a$. Thus
									there is a contraction $\gamma_a:\cld_{*,A_a}\rightarrow\cld_{C_a}$ such that $\tilde{B} = D_{*,A_a}\gamma^*_aD_{C_a}$. 
									\begin{align*}
										\tilde{B}	&= D_{*,A_a}\gamma^*_aD_{C_a}=S_ABS_C\\&=S_AD_{*,A}\gamma^*D_CS_C\end{align*}
									Using Equations (\ref{eqn for Z}) and (\ref{eqn Z*}), we obtain
									\[\tilde{B}	=D_{*,A_a}Z^*_{*,A}\gamma^*Z_CD_{C_a}.
									\]
									
									Since $A_a$ is a c.n.c. contraction, we have 	$\gamma^*_aD_{C_a} = Z^*_{*,A}\gamma^*Z_CD_{C_a}$ and thus
									\begin{align}\label{eqn3}
										\gamma_a& = Z^*_C\gamma Z_{*,A}\\\label{eqn3.1}
										D_{*,\gamma_a} &= Z^*_{C}D_{*,\gamma}Z_C.
									\end{align}
									By Equations (\ref{eqn for cf}), (\ref{eqn3}) and (\ref{eqn3.1}) we have
									\begin{align*}
										\theta_{C_a,E_a}(\lambda)D_{E_a}&=\begin{bmatrix}
											D_{*,\gamma_a}&\gamma_a\theta_{A_a}(\lambda)\end{bmatrix}
										\begin{bmatrix}
											D_{*,\gamma_a}D_{C_a}&0\\-A^*_a\gamma^*_aD_{C_a}&D_{A_a}\end{bmatrix}\\
										&=\begin{bmatrix}
											Z^*_CD_{*,\gamma}Z_C&Z^*_{C}\gamma Z_{*,A}Z^*_{*,A}\theta_A(\mu)Z_A
										\end{bmatrix}\begin{bmatrix}
											D_{*,\gamma_a}D_{C_a}&0\\
											-A^*_a\gamma^*_aD_{C_a}&D_{A_a}
										\end{bmatrix}\\
										&=\begin{bmatrix}
											Z^*_CD_{*,\gamma}Z_C&Z^*_{C}\gamma\theta_A(\mu)Z_A
										\end{bmatrix}\begin{bmatrix}
											D_{*,\gamma_a}D_{C_a}&0\\
											-A^*_a\gamma^*_aD_{C_a}&D_{A_a}
										\end{bmatrix}\\
										&=Z^*_C\begin{bmatrix}
											D_{*,\gamma}&\gamma\theta_A(\mu)
										\end{bmatrix}
										\begin{bmatrix}
											Z_C&0\\
											0&Z_A
										\end{bmatrix}\begin{bmatrix}
											D_{*,\gamma_a}D_{C_a}&0\\
											-A^*_a\gamma^*_aD_{C_a}&D_{A_a}
										\end{bmatrix}
										\\&= Z^*_{C}\begin{bmatrix}
											D_{*,\gamma} & \gamma\theta_A(\mu)
										\end{bmatrix}\sigma\sigma^{-1}
										\begin{bmatrix}
											Z_C & 0\\ 0 & Z_A
										\end{bmatrix}\begin{bmatrix}
											D_{*,\gamma_a}D_{C_a}&0\\
											-A_a^*\gamma_a^*D_{C_a}&D_{A_a}
										\end{bmatrix}\\
										&= Z^*_{C}\theta_{C,E}(\mu){\sigma}^{-1}
										\begin{bmatrix}
											Z_C & 0\\ 0 & Z_A
										\end{bmatrix}\begin{bmatrix}
											D_{*,\gamma_a}D_{C_a}&0\\
											-A_a^*\gamma_a^*D_{C_a}&D_{A_a},
										\end{bmatrix}
									\end{align*}
									where the second equality follows using Equation (\ref{eqn11}). Note that by Equation (\ref{eqn3}), $D_{*,\gamma_a} = Z^*_{C}D_{*,\gamma}Z_C$ and consequently 
									\begin{equation}\label{eqnZ_C}
										Z_CD_{*,\gamma_a} = D_{*,\gamma}Z_C.
									\end{equation}
									The maps $\sigma_aD_{E_a} = \begin{bmatrix}
										D_{*,\gamma_a}D_{C_a}&0\\
										-A_a^*\gamma_a^*D_{C_a}&D_{A_a}
									\end{bmatrix}$ and $\sigma D_E = \begin{bmatrix}
										D_{*,\gamma}D_{C}&0\\
										-A^*\gamma^*D_{C}&D_{A}
									\end{bmatrix}$ are unitary maps from $\cld_{E_a}$ onto $\cld_{*,\gamma_a}\oplus\cld_{A_a}$ and from $\cld_{E}$ onto $\cld_{*,\gamma}\oplus\cld_{A}$ respectively. Thus, $$\sigma^{-1}\begin{bmatrix}
										Z_C & 0\\
										0&Z_A
									\end{bmatrix}\sigma_a$$ is a unitary maps from $\cld_{E_a}$ onto $\cld_E$. Then
									\begin{align*}
										\theta_{C_a,E_a}(\lambda)D_{E_a} &= Z^*_{C}\theta_{C,E}(\mu){\sigma}^{-1}
										\begin{bmatrix}
											Z_C & 0\\ 0 & Z_A
										\end{bmatrix}\sigma_aD_{E_a}.
									\end{align*}
									Hence,  $\theta_{C_a,E_a}(\lambda)$ coincides with $\theta_{C,E}(\mu).$
								\end{proof}	
								
								\begin{theorem}
									Let $C$ be a contraction on a Hilbert space $\clh_C.$ Let $E,a$ and $E_a$ be as defined in Theorem \ref{thm2}. Let $\gamma:\mathcal{D}_{*,A}\rightarrow \mathcal{D}_C$ be a contraction such that $B^*=D_C\gamma D_{*,A}.$ Then the characteristic function of lifting $E_a$ satisfies the following relation 
									$$\theta_{C_a,E_a}(\lambda)D_{E_a} = Z^*_C\begin{bmatrix}
										D_{*,\gamma}&\gamma\theta_A(\mu)
									\end{bmatrix}
									\begin{bmatrix}
										D_{*,\gamma}D_C&0\\
										{\theta_A(a)}^*\gamma^*D_C&D_{A_a}
									\end{bmatrix}\begin{bmatrix}
										S_C&0\\0&S_A\end{bmatrix} \text{ for }\lambda\in\mathbb{D},$$
									where $S_C$ and $S_A$ are the contractions as given in Equation \ref{eqn of S_T}.
								\end{theorem}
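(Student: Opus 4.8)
The plan is to read the stated formula off the identity already obtained at the very end of the proof of Theorem \ref{thm2}, namely
\[
\theta_{C_a,E_a}(\lambda)D_{E_a}=Z_C^{*}\,\theta_{C,E}(\mu)\,\sigma^{-1}\begin{bmatrix}Z_C&0\\0&Z_A\end{bmatrix}\sigma_a D_{E_a},\qquad \mu=\tfrac{\lambda+a}{1+\overline{a}\lambda},
\]
where, applying Equation (\ref{eqn for sigma}) to the minimal lifting $E_a$ of $C_a$ furnished by Theorem \ref{thm2}, one has $\sigma_a D_{E_a}=\begin{bmatrix}D_{*,\gamma_a}D_{C_a}&0\\-A_a^{*}\gamma_a^{*}D_{C_a}&D_{A_a}\end{bmatrix}$. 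First I would insert the factorization $\theta_{C,E}(\mu)=\begin{bmatrix}D_{*,\gamma}&\gamma\theta_A(\mu)\end{bmatrix}\sigma$ of Lemma \ref{characteristic function lemma} and cancel the adjacent $\sigma\,\sigma^{-1}=I$, which leaves
\[
\theta_{C_a,E_a}(\lambda)D_{E_a}=Z_C^{*}\begin{bmatrix}D_{*,\gamma}&\gamma\theta_A(\mu)\end{bmatrix}\begin{bmatrix}Z_C&0\\0&Z_A\end{bmatrix}\sigma_a D_{E_a}.
\]
The theorem then reduces to the single matrix identity $\begin{bmatrix}Z_C&0\\0&Z_A\end{bmatrix}\sigma_a D_{E_a}=\begin{bmatrix}D_{*,\gamma}D_C&0\\ \theta_A(a)^{*}\gamma^{*}D_C&D_{A_a}\end{bmatrix}\begin{bmatrix}S_C&0\\0&S_A\end{bmatrix}$, after which the claim follows by restoring the prefactor $Z_C^{*}\begin{bmatrix}D_{*,\gamma}&\gamma\theta_A(\mu)\end{bmatrix}$.

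I would verify this matrix identity entry by entry using the intertwiners already recorded. For the $(1,1)$ entry I would use the equivalent form $Z_C D_{*,\gamma_a}=D_{*,\gamma}Z_C$ of $D_{*,\gamma_a}=Z_C^{*}D_{*,\gamma}Z_C$ from Equation (\ref{eqn3}), followed by $Z_C D_{C_a}=D_C S_C$ from Equation (\ref{eqn11}), to get $Z_C D_{*,\gamma_a}D_{C_a}=D_{*,\gamma}D_C S_C$; the $(1,2)$ entry is trivially $0$, and the $(2,2)$ entry is governed by the defect intertwiner $Z_A D_{A_a}=D_A S_A$ of Equation (\ref{eqn11}). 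The substantive computation is the $(2,1)$ entry $-Z_A A_a^{*}\gamma_a^{*}D_{C_a}$: expanding $\gamma_a^{*}=Z_{*A}^{*}\gamma^{*}Z_C$ from Equation (\ref{eqn3}) and applying $Z_C D_{C_a}=D_C S_C$ turns it into $-Z_A A_a^{*}Z_{*A}^{*}\,\gamma^{*}D_C S_C$, so that matching it against the target entry $\theta_A(a)^{*}\gamma^{*}D_C S_C$ comes down to the single operator identity $-Z_A A_a^{*}Z_{*A}^{*}=\theta_A(a)^{*}$.

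The main obstacle is establishing this last identity, which is exactly where the M\"obius relation between the characteristic functions enters. I would specialize Equation (\ref{eqn for cf}) to $\lambda=0$, so that $\mu=a$ and the relation becomes $Z_{*A}\,\theta_{A_a}(0)\,Z_A^{-1}=\theta_A(a)$; note that $\lambda$ is set to $0$ only to extract this operator identity, which is itself independent of $\lambda$, so no loss of generality results for the final formula. Since $A_a$ is c.n.c., the constant Fourier coefficient of its characteristic function is $-A_a$ restricted to $\cld_{A_a}$, that is $\theta_{A_a}(0)=-A_a|_{\cld_{A_a}}$, in accordance with the constant term $-\underline{A}$ of Equation (\ref{charafn pop MA}). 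Hence $\theta_A(a)=-Z_{*A}A_a Z_A^{-1}$, and taking adjoints together with the unitarity of $Z_A$ and $Z_{*A}$ gives $\theta_A(a)^{*}=-Z_A A_a^{*}Z_{*A}^{*}$, precisely the identity needed. Substituting it into the $(2,1)$ entry and assembling the four entries yields the matrix identity above, completing the proof.
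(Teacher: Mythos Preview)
Your argument is essentially the paper's own proof: both start from the penultimate display of Theorem~\ref{thm2}, cancel $\sigma\sigma^{-1}$ via Lemma~\ref{characteristic function lemma}, and then simplify $\begin{bmatrix}Z_C&0\\0&Z_A\end{bmatrix}\sigma_aD_{E_a}$ entrywise using the defect intertwiners together with the specialization $\theta_A(a)=Z_{*A}\theta_{A_a}(0)Z_A^{-1}=-Z_{*A}A_aZ_A^{-1}$ of Equation~(\ref{eqn for cf}) at $\lambda=0$. One small caveat: the intertwiner $Z_AD_{A_a}=D_AS_A$ you invoke for the $(2,2)$ entry yields $D_AS_A$, not $D_{A_a}S_A$; the paper's proof nevertheless records $D_{A_a}S_A$ in every line, so this appears to be a typo in the stated formula rather than a gap in your reasoning.
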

								\begin{proof}
									From the proof of  Theorem \ref{thm2}, we have \begin{align*}
										\theta_{C_a,E_a}(\lambda)D_{E_a} &= Z^*_{C}\begin{bmatrix}
											D_{*,\gamma} & \gamma\theta_A(\mu)
										\end{bmatrix}
										\begin{bmatrix}
											Z_C & 0\\ 0 & Z_A
										\end{bmatrix}\begin{bmatrix}
											D_{*,\gamma_a}D_{C_a}&0\\
											-A_a^*\gamma_a^*D_{C_a}&D_{A_a}
										\end{bmatrix}	\\
										&=Z^*_C\begin{bmatrix}
											D_{*,\gamma}&\gamma\theta_A(\mu)
										\end{bmatrix}
										\begin{bmatrix}
											Z_CD_{*,\gamma_a}D_{C_a}&0\\
											-Z_AA_a^*\gamma_a^*D_{C_a}&Z_AD_{A_a}
										\end{bmatrix}.\end{align*}
									Using the identity $\theta_{A_a}(0)=A_a$ and  Equation (\ref{eqnZ_C}), we rewrite $\theta_{C_a,E_a}(\lambda)D_{E_a}$  as
									$$\theta_{C_a,E_a}(\lambda)D_{E_a}=Z^*_C\begin{bmatrix}
										D_{*,\gamma}&\gamma\theta_A(\mu)
									\end{bmatrix}\begin{bmatrix}
										D_{*,\gamma}Z_CD_{C_a}&0\\
										Z_A(\theta_{A_a}(0))^*\gamma^*_aD_{C_a}&D_{A_a}S_A
									\end{bmatrix}.$$ Now by Equation (\ref{eqn for cf}), we obtain
									$$\theta_{C_a,E_a}(\lambda)D_{E_a}=Z^*_C\begin{bmatrix}
										D_{*,\gamma}&\gamma\theta_A(\mu)
									\end{bmatrix}
									\begin{bmatrix}
										D_{*,\gamma}D_CS_C&0\\
										{\theta_A(a)}^*Z_{*,A}\gamma^*_aD_{C_a}&D_{A_a}S_A
									\end{bmatrix}.$$
									Further, by using Equations (\ref{eqn3}) and (\ref{eqn11}), we get
									\begin{align*}
										\theta_{C_a,E_a}(\lambda)D_{E_a}&=Z^*_C\begin{bmatrix}
											D_{*,\gamma}&\gamma\theta_A(\mu)
										\end{bmatrix}
										\begin{bmatrix}
											D_{*,\gamma}D_CS_C&0\\
											{\theta_A(a)}^*\gamma^*Z_CD_{C_a}&D_{A_a}S_A
										\end{bmatrix}\\
										&=Z^*_C\begin{bmatrix}
											D_{*,\gamma}&\gamma\theta_A(\mu)
										\end{bmatrix}
										\begin{bmatrix}
											D_{*,\gamma}D_CS_C&0\\
											{\theta_A(a)}^*\gamma^*D_CS_C&D_{A_a}S_A
										\end{bmatrix}\\
										&=Z^*_C\begin{bmatrix}
											D_{*,\gamma}&\gamma\theta_A(\mu)
										\end{bmatrix}
										\begin{bmatrix}
											D_{*,\gamma}D_C&0\\
											{\theta_A(a)}^*\gamma^*D_C&D_{A_a}
										\end{bmatrix}\begin{bmatrix}
											S_C&0\\0&S_A
										\end{bmatrix}.
									\end{align*}
									This completes the proof.
								\end{proof} 
								Let $A$ be a contraction on a Hilbert space $\clh_A$ and $\mathbb{D}$ denote the open unit disk. For each $\lambda\in\mathbb{D},$ we can define a contraction $A_{\lambda} := (A-\lambda I)(I-\bar{\lambda}A)^{-1}$ on $\clh_A$. If $\theta_{A_{\lambda}}$ is the characteristic function of $A_{\lambda}$ then
								\begin{equation}
									\| \theta_{A_{\lambda}}(0) \|= \| \theta_{A}(\lambda) \|
								\end{equation}
								(cf. Section 1.3 of Chapter VII of \cite{NAGY}). The following theorem establishes a relationship between the norm of the characteristic function of a lifting and the norm of the contraction $A_{\lambda}$.
								
								\begin{theorem}
									Let $E = \begin{bmatrix}
										C&0\\
										B&A
									\end{bmatrix}$ be a minimal contractive lifting of $C$ and $\lambda\in\mathbb{D},$ then 
									\[
									\norm{\theta_{C,E}(\lambda)}\leq \norm{D_{*,\gamma}}+\norm{(A-\lambda I)(I-\bar{\lambda}A)^{-1}}.
									\]
								\end{theorem} 
								\begin{proof} 
									By Proposition \ref{Prop lifting}, we know that there exists a contraction $\gamma:\cld_{*,A}\rightarrow\cld_{C}$ such that $B = D_{*,A}\gamma^*D_C$. The characteristic function $\theta_{C,E}$ for the lifting E of C is given by    
									\[\theta_{C,E}(\lambda) = \begin{bmatrix}
										D_{*,\gamma}&\gamma\theta_{A}(\lambda)
									\end{bmatrix}\sigma\] where $\sigma$ is a unitary map from $\cld_{E}$ to $\cld_{*,\gamma}\oplus\cld_A$. Thus 
									\begin{align*}
										\norm{\theta_{C,E}(\lambda)}&=||\begin{bmatrix}
											D_{*,\gamma}&\gamma\theta_{A}(\lambda)
										\end{bmatrix}\sigma||\\
										&\leq \norm{D_{*,\gamma}}+\norm{\gamma\theta_{A}({\lambda})}\\
										&\leq \norm{D_{*,\gamma}}+\norm{\theta_{A}({\lambda})}\\
										&=\norm{D_{*,\gamma}}+\norm{\theta_{A_{\lambda}}(0)}\\
										&\leq \norm{D_{*,\gamma}}+\norm{A_{\lambda}}\\
										&=\norm{D_{*,\gamma}}+\norm{(A-\lambda I)(I-\bar{\lambda}A)^{-1}}.
									\end{align*}
									In the second last step of the above computation, we have used $\theta_{A_{\lambda}}(0) = A_{\lambda}|_{\cld_{A}}.$This completes the proof.
								\end{proof}
								
								\section{Equivalence of lifting of contractions}
								In this section, we discuss unitary equivalence of two minimal contractive liftings of a row contraction and unitary equivalence of their  characteristic function. First we recall the definition of equivalence of two multi-analytic operators, which we will use throughout this section.
								\begin{definition}\cite{DEYII}
									We say two multi-analytic operators $M_1:\Gamma\otimes \clh_1\rightarrow\Gamma\otimes \clh$ and $M_2:\Gamma\otimes \clh_2\rightarrow \Gamma\otimes \clh$ with symbols $\theta_1$ and $\theta_2$ are {\it equivalent}, if there exists a unitary $v:\clh_1\rightarrow \clh_2$ such that $\theta_1=\theta_2v$. We also say $\theta_1$ and $\theta_2$ are {\it equivalent} and write $\theta_1 \simeq \theta_2.$
								\end{definition}

								In this section, one of our main result is the following.
								\begin{theorem}
									Let $\underline{C}$ be a row contraction and $\underline{E}$, $\underline{E}'$ be two minimal contractive liftings of $\underline{C}$. Then $\underline{E}$ and $\underline{E}'$ are unitarily equivalent if and only if $\theta_{C,E}$ and $\theta_{C,E'}$ are equivalent.
								\end{theorem}
								We have divided the proof of above theorem into the following two result. 
								
								The following result was proved in \cite{DEY}, but here we present a simple proof of this using colligation matrix.
								\begin{proposition}\label{prop reduces lifting equivalent}
									Let $\underline{C}=(C_1,C_2,\ldots, C_d)$ be a row contraction on a Hilbert space $\clh_C$. Let
									\[\underline{E}=\begin{bmatrix}
										\underline{C}&0\\
										\underline{B}&\underline{A}
									\end{bmatrix}  \mbox{~and~} \underline{E}'=\begin{bmatrix}
										\underline{C}&0\\
										\underline{B}'&\underline{A}'
									\end{bmatrix}\]  be two  minimal contractive liftings of $\underline{C}$ on Hilbert spaces $\clh_E=\clh_C\oplus \clh_A$ and $\clh_{E^{'}}=\clh_C\oplus \clh_{A^{'}}$ respectively. If  $U:\clh_A\rightarrow \clh_{A^{'}}$ is  a unitary such that
									\begin{align*}
										\begin{bmatrix}
											I_{\clh_C}&0\\
											0&U
										\end{bmatrix}	\begin{bmatrix}
											\underline{C}&0\\
											\underline{B}&\underline{A}
										\end{bmatrix}=
										\begin{bmatrix}
											\underline{C}&0\\
											\underline{B}'&\underline{A}'
										\end{bmatrix}\begin{bmatrix}
											I_{\underset{i=1}{\overset{d}{\oplus}}\clh_C}&0\\
											0&\underset{i=1}{\overset{d}{\oplus}}U
										\end{bmatrix},
									\end{align*}
then $\theta_{C,E}$ and $\theta_{C,E^{'}}$ are equivalent.
								\end{proposition}
								\begin{proof} The unitary $U:\clh_A\rightarrow \clh_{A^{'}}$ satisfies
									\begin{align}\label{eqn1}
										UB_i=B_i^{'}\text{ and }UA_i=A_i^{'}U.
									\end{align}
									On computing $I-\underline{E}^*\underline{E}$ and using the above equation, we obtain
									\begin{align*}
										D_E=\begin{bmatrix}
											I_{\clh_C}&0\\
											0&\underset{i=1}{\overset{d}{\oplus}}U^*
										\end{bmatrix}D_{E'}\begin{bmatrix}
											I_{\underset{i=1}{\overset{d}{\oplus}}\clh_C}&0\\
											0&\underset{i=1}{\overset{d}{\oplus}}U
										\end{bmatrix}.
									\end{align*}
									Similarly, evaluating $I-\underline{A}\underline{A}^*,\,I-\underline{A}^*\underline{A}$ and using Equation (\ref{eqn1}), we get
									\begin{align}\label{eqn2}
										D_A=&\left(\underset{i=1}{\overset{d}{\oplus}}U^*\right)D_{A^{'}}\left(\underset{i=1}{\overset{d}{\oplus}}U\right),\\
										\label{eqn4}	D_{*,A}=&U^*D_{*,A^{'}}U.
									\end{align}
									From Equations (\ref{eqn1}), (\ref{eqn2}) and the representations $\underline{B}=D_{*,A}\gamma^*D_C$, $\underline{B}^{'}=D_{*,A^{'}}{\gamma'}^*D_C$ (from Proposition \ref{Prop lifting}), we have
									\begin{align*}
										D_{*,A^{'}}{\gamma'}^*D_C=&	UD_{*,A}\gamma^*D_C\\
										=&D_{*,A^{'}}U\gamma^*D_C.
									\end{align*}
									Using the fact that $\underline{A}^{'}$ is a c.n.c. row contraction, we get $U\gamma^*={\gamma'}^*$, and consequently
									\begin{align}\label{eqn 3}
										\gamma=\gamma 'U|_{\cld_{*,A}}.
									\end{align}
									Thus, $D_{*,\gamma}=D_{*,\gamma'}.$
									From Equations (\ref{eqn1}) and (\ref{eqn 3}),
									$\underline{A}^*\gamma^*D_C=\left(\underset{i=1}{\overset{d}{\oplus}}U^*\right)\underline{A^{'}}^*{\gamma '}^*D_C$. Let $\sigma$ be the unitary operator defined in Lemma \ref{characteristic function lemma}. Then
									\begin{align*}
										\sigma D_E=&\begin{bmatrix}
											D_{*,\gamma}D_C&0\\
											-\underline{A}^*\gamma^*D_C&D_A
										\end{bmatrix}\\
										=&\begin{bmatrix}
											D_{*,\gamma'}D_C&0\\
											-\left(\underset{i=1}{\overset{d}{\oplus}}U^*\right)\underline{A'}^*{\gamma'}^*D_C&\left(\underset{i=1}{\overset{d}{\oplus}}U^*\right)D_{A'}\left(\underset{i=1}{\overset{d}{\oplus}}U\right)
										\end{bmatrix}\\
										=&\begin{bmatrix}
											I_{\underset{i=1}{\overset{d}{\oplus}}\clh_C}&0\\
											0&\underset{i=1}{\overset{d}{\oplus}}U^*
										\end{bmatrix}
										\begin{bmatrix}
											D_{*,\gamma '}D_C&0\\
											-\underline{A^{'}}^*{\gamma '}^*D_C&D_{A^{'}}\left(\underset{i=1}{\overset{d}{\oplus}}U\right)
										\end{bmatrix}\\
										=&\begin{bmatrix}
											I_{\underset{i=1}{\overset{d}{\oplus}}\clh_C}&0\\
											0&\underset{i=1}{\overset{d}{\oplus}}U^*
										\end{bmatrix}
										\begin{bmatrix}
											D_{*,\gamma '}D_C&0\\
											-\underline{A^{'}}^*{\gamma '}^*D_C&D_{A^{'}}
										\end{bmatrix}
										\begin{bmatrix}
											I_{\underset{i=1}{\overset{d}{\oplus}}\clh_C}&0\\
											0&\left(\underset{i=1}{\overset{d}{\oplus}}U\right)
										\end{bmatrix}\\
										=&\begin{bmatrix}
											I_{\underset{i=1}{\overset{d}{\oplus}}\clh_C}&0\\
											0&\underset{i=1}{\overset{d}{\oplus}}U^*
										\end{bmatrix}
										\sigma^{'}D_{E^{'}}
										\begin{bmatrix}
											I_{\underset{i=1}{\overset{d}{\oplus}}\clh_C}&0\\
											0&\left(\underset{i=1}{\overset{d}{\oplus}}U\right)
										\end{bmatrix}\\
										=&\begin{bmatrix}
											I_{\underset{i=1}{\overset{d}{\oplus}}\clh_C}&0\\
											0&\underset{i=1}{\overset{d}{\oplus}}U^*
										\end{bmatrix}
										\sigma^{'}
										\begin{bmatrix}
											I_{\underset{i=1}{\overset{d}{\oplus}}\clh_C}&0\\
											0&\left(\underset{i=1}{\overset{d}{\oplus}}U\right)
										\end{bmatrix}
										D_E.
									\end{align*}
									Since $UA_i=A_i^{'}U$, we have $(I_{\Gamma}\otimes U|_{D_{*,A}})\theta_A=\theta_{A^{'}}\left(\underset{i=1}{\overset{d}{\oplus}}U\right)|_{\cld_A}$ by Remark \ref{Remark popescu charac fn}.
									Thus
									\begin{align*}
										\theta_{C,E}=&\begin{bmatrix}
											D_{*,\gamma}&\left(I_{\Gamma}\otimes\gamma\right)\theta_{A}
										\end{bmatrix}\sigma\\
										=&\begin{bmatrix}
											D_{*,\gamma'}&\left(I_{\Gamma}\otimes\gamma'\right)(I_{\Gamma}\otimes U)\theta_{A}
										\end{bmatrix}\begin{bmatrix}
											I_{\underset{i=1}{\overset{d}{\oplus}}\clh_C}&0\\
											0&\underset{i=1}{\overset{d}{\oplus}}U^*
										\end{bmatrix}
										\sigma^{'}
										\begin{bmatrix}
											I_{\underset{i=1}{\overset{d}{\oplus}}\clh_C}&0\\
											0&\left(\underset{i=1}{\overset{d}{\oplus}}U\right)
										\end{bmatrix}|_{\cld_E}
	\end{align*}
\begin{align*}
										=&\begin{bmatrix}
											D_{*,\gamma'}&\left(I_{\Gamma}\otimes\gamma'\right)\theta_{A^{'}}\left(\underset{i=1}{\overset{d}{\oplus}}U\right)
										\end{bmatrix}\begin{bmatrix}
											I_{\underset{i=1}{\overset{d}{\oplus}}\clh_C}&0\\
											0&\underset{i=1}{\overset{d}{\oplus}}U^*
										\end{bmatrix}
										\sigma^{'}
										\begin{bmatrix}
											I_{\underset{i=1}{\overset{d}{\oplus}}\clh_C}&0\\
											0&\left(\underset{i=1}{\overset{d}{\oplus}}U\right)
										\end{bmatrix}|_{\cld_E}\\
										=&\begin{bmatrix}
											D_{*,\gamma'}&\left(I_{\Gamma}\otimes\gamma'\right)\theta_{A^{'}}
										\end{bmatrix}\sigma^{'}
										\begin{bmatrix}
											I_{\underset{i=1}{\overset{d}{\oplus}}\clh_C}&0\\
											0&\left(\underset{i=1}{\overset{d}{\oplus}}U\right)
										\end{bmatrix}|_{\cld_E}\\
										=&\theta_{C,E^{'}}\begin{bmatrix}
											I_{\underset{i=1}{\overset{d}{\oplus}}\clh_C}&0\\
											0&\left(\underset{i=1}{\overset{d}{\oplus}}U\right)
										\end{bmatrix}|_{\cld_E}.
									\end{align*}
									Hence $\theta_{C,E}$ and $\theta_{C,E^{'}}$ are equivalent.
								\end{proof}
								Now, we prove that if characteristic function of two minimal contractive  liftings are equivalent, then the liftings are unitarily equivalent. The proof uses  colligation matrix based techniques and not the functional model theory.
								\begin{proposition}\label{thm same characteristic function lifting}
									Let $\underline{C}=(C_1,C_2,\ldots, C_d)$ be a row contraction. Suppose  $\underline{E}$ and $\underline{E'}$ be two minimal contractive liftings of $\underline{C}$ such that characteristic functions $\theta_{C,E}$ and $\theta_{C,E'}$ of liftings are equivalent. Then $\underline{E}$ and $\underline{E'}$ are unitarily equivalent.
								\end{proposition}
								\begin{proof} Let 
									\[E_i=\begin{bmatrix}
										C_i&0\\
										B_i&A_i
									\end{bmatrix}  \mbox{~and~} E'_i=\begin{bmatrix}
										C_i&0\\
										B'_i&A'_i
									\end{bmatrix}\] 
									be two liftings of $\underline{C}$ on Hilbert spaces $\clh_E=\clh_C\oplus \clh_A$  and $\clh_{E'}=\clh_C\oplus \clh_{A'}$ respectively, for $i=1,2,\ldots,d$ such that $\theta_{C,E}$ and $\theta_{C,E'}$ are equivalent. So, there is a unitary $v:\mathcal{D}_E\rightarrow\mathcal{D}_{E'}$ such that {$\theta_{C,E}=\theta_{C,E'} v$}.
									
									Let $\gamma$ and $\gamma'$ be the contractions, and $\sigma$ and $\sigma'$ be the unitary operators associated with $\underline{E}$ and $\underline{E'}$ respectively, as in Lemma \ref{characteristic function lemma}. By Lemma \ref{lemma characteristic fun of colligation}, $\theta_{C,E}$ and {$\theta_{C,E'} v$} are {transfer} functions of the colligation matrices 
									{\begin{align*}
											W_1=&\begin{bmatrix}
												\underline{A}^*& D_AP_{D_A}\sigma\\
												\gamma D_{*,A}& \left( D_{*,\gamma}P_{D_{*,\gamma}}\sigma-\gamma \underline{A}P_{D_A}\sigma\right)
											\end{bmatrix}\text{ and }\\	
											W_2=&\begin{bmatrix}
												\underline{A'}^*& D_{A'}P_{D_{A'}}\sigma'v\\
												\gamma' D_{*,A'}& \left( D_{*,\gamma'}P_{D_{*,\gamma'}}\sigma'v-\gamma' \underline{A'}P_{D_{A'}}\sigma'v\right)
											\end{bmatrix},
									\end{align*}}
									respectively. By Lemma \ref{lemma ball} and Lemma \ref{lemma co-isometric colligation}, we know that $W_1$ and $W_2$ are unitarly equivalent. Thus there exists unitary $U:\clh_A\rightarrow \clh_{A'}$ such that
									\begin{enumerate}
										\item\label{subeq1} ${A'}_i^*=UA_i^*U^*$,
										\item\label{subeq2} $\gamma'D_{*,A'}=\gamma D_{*,A}U^{*}$
										\item\label{subeq3} $D_{*,\gamma'}P_{D_{*,\gamma'}}\sigma'v-\gamma' \underline{A'}P_{D_{A'}}\sigma'v=D_{*,\gamma}P_{D_{*,\gamma}}\sigma-\gamma \underline{A}P_{D_A}\sigma$,
										\item\label{subeq4} $D_{A'}P_{D_{A'}}\sigma'v=\left(\underset{i=1}{\overset{d}{\oplus}}{U^*}\right)D_AP_{D_A}\sigma$.
									\end{enumerate}
									From Property (\ref{subeq2}) of $U,$ we have $\underline{B}'=D_{*,A'}{\gamma'}^*D_C=\left(\gamma D_{*,A}U^*\right)^*D_C=UD_{*,A}\gamma^*D_C=U\underline{B}$. Hence
										\[\begin{bmatrix}
											I_{\clh_C}&0\\
											0&U
										\end{bmatrix}	\begin{bmatrix}
											\underline{C}&0\\
											\underline{B}&\underline{A}
										\end{bmatrix}=
										\begin{bmatrix}
											\underline{C}&0\\
											\underline{B}'&\underline{A}'
										\end{bmatrix}\begin{bmatrix}
											I_{\underset{i=1}{\overset{d}{\oplus}}\clh_C}&0\\
											0&\underset{i=1}{\overset{d}{\oplus}}U
										\end{bmatrix}.\]
										Hence $\underline{E}$ and $\underline{E}'$ are unitarily equivalent. From the above equality, we also have 
										\begin{equation*}
											D_E=\begin{bmatrix}
												I_{\underset{i=1}{\overset{d}{\oplus}}\clh_C} &0\\
												0&\underset{i=1}{\overset{d}{\oplus}}U^*
											\end{bmatrix} D_{E'}
											\begin{bmatrix}
												I_{\underset{i=1}{\overset{d}{\oplus}}\clh_C} &0\\
												0&\underset{i=1}{\overset{d}{\oplus}}U
											\end{bmatrix}.
										\end{equation*}
								Now, we claim that
										\[v=\begin{bmatrix}
											I_{\underset{i=1}{\overset{d}{\oplus}}\clh_C}&0\\
											0&\underset{i=1}{\overset{d}{\oplus}}U
										\end{bmatrix}|_{\cld_E}.\]
									To prove our claim, suppose $\theta'$ be the transfer function for $W_2$. Then 
									\begin{align*}
										\theta'=&\left( D_{*,\gamma'}P_{D_{*,\gamma'}}-\gamma' \underline{A'}P_{D_{A'}}\right)\sigma'\begin{bmatrix}
											I_{\underset{i=1}{\overset{d}{\oplus}}\clh_C}&0\\
										0&\underset{i=1}{\overset{d}{\oplus}}U
										\end{bmatrix}|_{\mathcal{D}_E}\\
										+&\gamma' D_{*,A'}({I_{\Gamma\otimes \clh_1}}-(\underline{R}\otimes I_{\clh_1})(I_{\Gamma} \otimes A^{'*}))\mathcal{D}_{A'}
										P_{\mathcal{D}_{A'}}\sigma'
										\begin{bmatrix}
									I_{\underset{i=1}{\overset{d}{\oplus}}\clh_C}&0\\
									0&\underset{i=1}{\overset{d}{\oplus}}U
							\end{bmatrix}|_{\mathcal{D}_E}.
									\end{align*}
									With a similar computation, as in the proof of Proposition \ref{prop reduces lifting equivalent}, we get that 
									\begin{align*}
										\theta'\mathcal{D}_E=(\mathcal{D}_{*,\gamma}P_{\mathcal{D}_{*,\gamma}}-\gamma AP_{\mathcal{D}_A})\sigma\mathcal{D}_E=\theta_{C,E}\mathcal{D}_E.
									\end{align*}
									But, we know that $\theta_{C,E}\mathcal{D}_E=\theta_{C,E'}v\mathcal{D}_E$. By our assumption $E'$ is minimal isometric lifting of $C$ and as a result $\theta_{C,E'}$ is injective. Hence 
										\[v=\begin{bmatrix}
										I_{\underset{i=1}{\overset{d}{\oplus}}\clh_C}&0\\
										0&\underset{i=1}{\overset{d}{\oplus}}U
									\end{bmatrix}|_{\cld_E}.\]
									This completes our proof.
								\end{proof}

								\begin{remark}
									Let $\underline{E}$ and $\underline{E}'$ be as defined in Proposition \ref{prop reduces lifting equivalent}. It is clear from the proof of the above proposition, that if there exists a unitary $U:\clh_A\rightarrow \clh_{A'}$ satisfying
									\begin{align}
										\begin{bmatrix}
											I_{\clh_C}&0\\
											0&U
										\end{bmatrix}\underline{E}=\underline{E}'\begin{bmatrix}
											I_{\underset{i=1}{\overset{d}{\oplus}}\clh_C}&0\\
											0&\underset{i=1}{\overset{d}{\oplus}}U
										\end{bmatrix},
									\end{align}
									then \begin{align*}
										\theta_{C,E}=\theta_{C,E'}\begin{bmatrix}
											I_{\underset{i=1}{\overset{d}{\oplus}}\clh_C}&0\\
											0&\left(\underset{i=1}{\overset{d}{\oplus}}U\right)
										\end{bmatrix}|_{\cld_E}.
									\end{align*}
								\end{remark}

								Let $\underline{C}=(C_1,C_2,\ldots, C_d)$ be a row contraction on a Hilbert space $\clh_C$ and $M_{\theta}: \Gamma \otimes \cld \to \Gamma \otimes \cld_C$ be a contractive multi-analytic operator with an injective symbol $\theta$.  Let $\underline{E} = \begin{bmatrix}
									\underline{C} &0\\
									\underline {B} & \underline{A}
								\end{bmatrix}$ be a minimal contractive lifting of $\underline{C}$ on the Hilbert space $\clh_E=\clh_C \oplus \clh_A$ associated to $\theta$ (cf. page 34 of \cite{DEY minimal}). Then by Proposition \ref{Prop lifting} there exists a  contraction $\gamma:\mathcal{D}_{*,A}\rightarrow \mathcal{D}_C$ (cf. Equation (\ref{eqn gamma})) such that $\underline{B}^*=D_C\gamma D_{*,A}.$ Since $\underline{E}$ is a minimal contractive lifting of $\underline{C},$ and we get that $\gamma$ is resolving and $\underline{A}$ is c.n.c. Let the symbol of the characteristic function of $\underline{A}$ be denoted by $\theta_A.$ Since $\underline{A}$ is c.n.c., it was establised in \cite{POP2} that $M_{\theta_A}$ is purely contractive and satisfies the Szeg\"o condition. By Corollary 3.11 of \cite{DEY minimal}, $M_{\theta}$ is equivalent to the characteristic function of the lifting $\underline{E}$ of $\underline{C}.$ By Lemma \ref{characteristic function lemma}, $ \begin{bmatrix}
									D_{*,\gamma}&(I_{\Gamma}\otimes\gamma)\theta_A
								\end{bmatrix}{\sigma}$ is a decomposition of $\theta_{C,E}$ 
								where $\sigma:\mathcal{D}_E\rightarrow\mathcal{D}_{*,\gamma}\oplus\mathcal{D}_A$ is a unitary of the form given by Equation (\ref{eqn for sigma}) and hence
								\begin{equation} \label{decomposition eq}
									\theta \simeq \begin{bmatrix}
										D_{*,\gamma}&(I_{\Gamma}\otimes\gamma)\theta_A
									\end{bmatrix}{\sigma}
								\end{equation} 
								
								Conversely, let $\underline{C}$ be a row contraction on a Hilbert space $\clh_C$ and $\tilde{\theta}: \clm \to \Gamma \otimes \cln$ be the symbol of a purely contractive multi-analytic operator satisfying the Szeg\"{o} condition.
								The model space  is defined as $\clh_{\tilde{\theta}}:=(\Gamma \otimes \cln) \oplus \overline{\Delta_{\tilde{\theta}} (\Gamma \otimes \clm)}$ where $\Delta_{\tilde{\theta}}:=(I-M^*_{\tilde{\theta}} M_{\tilde{\theta}})^{\frac{1}{2}}.$ We define operators $A_i$ for $i=1,2,\dots,d$ on the Hilbert space
								\begin{equation}
									\clh_A= \clh_{\tilde{\theta}} \ominus \{ M_{\tilde{\theta}} \xi \oplus \Delta_{\tilde{\theta}} \xi: \xi \in \Gamma \otimes \cld\}
								\end{equation}
								by
								\begin{equation}
									A_i^*(h \oplus \Delta_{\tilde{\theta}} k)=  (L_i \otimes I)^* h \oplus F^*_i \Delta_{\tilde{\theta}}  k
								\end{equation}
								where the operator $F_i$ is defined by $F_i \Delta_{\tilde{\theta}} l= \Delta_{\tilde{\theta}} (L_i \otimes I) l$ for $l \in  \Gamma \otimes \clm.$ It was shown in Proposition 5.1 of \cite{POP2}, that the tuple $\underline{A}$ is c.n.c.  and there are unitaries $U: \clm \to \cld_A$ and $\tilde{U}: \cln \to \cld_{*,A}$ such that 
								\[ \theta_A= (I \otimes \tilde{U}) \tilde{\theta} U^*.\]
								Let $\gamma: \cld_{*,A} \to \cld_C$ be a resolving contraction. Define $\underline{B}= D_{*,A} \gamma^* D_C$ and
								$\underline{E} = \begin{bmatrix}
									\underline{C} &0\\
									\underline {B} & \underline{A} \end{bmatrix}.$ It follows from Proposition \ref{Prop lifting} and Definition \ref{reduced} that $\underline{E}$ is a minimal contractive lifting. Then by Lemma \ref{characteristic function lemma}
								\[ \theta_{C,E}= \begin{bmatrix}
									D_{*,\gamma}&(I_{\Gamma}\otimes\gamma \tilde{U})\tilde{\theta}
								\end{bmatrix}{\sigma}\]
								for a unitary $\sigma:\mathcal{D}_E \rightarrow \mathcal{D}_{*,\gamma} \oplus U^*\mathcal{D}_A.$ Note that by Proposition 3.12 of \cite{DEY minimal}, $\theta_{C,E}$ is injective.

								\begin{proposition} \label{prop_para}
									Let $\underline{C}$ be a row contraction on the Hilbert space $\clh_C.$ Suppose $M_{\theta}: \Gamma \otimes \cld \to \Gamma \otimes \cld_C$ and $M_{\hat{\theta}}: \Gamma \otimes \clm \to \Gamma \otimes \cld_C$ be two contractive multi-analytic operators with injective symbols.  Let $\underline{E} = \begin{bmatrix}
										\underline{C} &0\\
										\underline {B} & \underline{A} \end{bmatrix}$ and $\underline{\hat{E}} = \begin{bmatrix}
										\underline{C} &0\\
										\underline {\hat{B}} & \underline{\hat{A}} \end{bmatrix}$ be the minimal contractive lifting of $\underline{C}$ on the Hilbert space $\clh_E=\clh_C \oplus \clh_A$ and $\clh_{\hat{E}}=\clh_C \oplus \clh_{\hat{A}}$ respectively, associated to $M_{\theta}$ and $M_{\hat{\theta}}$ respectively, for some Hilbert spaces $\clh_A$ and $\clh_{\hat{A}}$. 
									Then by Proposition \ref{Prop lifting} there exist  contractions $\gamma:\mathcal{D}_{*,A}\rightarrow \mathcal{D}_C$ and $\hat{\gamma}:\mathcal{D}_{*,\hat{A}}\rightarrow \mathcal{D}_C$ such that $\underline{B}^*=D_C\gamma D_{*,A}$ and $\underline{\hat{B}}^*=D_C \hat{\gamma} D_{*,\hat{A}}.$ The symbols $\theta$ and $\hat{\theta}$ are equivalent if and only if there exist unitary operators $U: \cld_A \to \cld_{\hat{A}}$ and $\tilde{U}: \cld_{*,\hat{A}} \to \cld_{*,A}$ such that
									\begin{equation}   \label{parameter}
 \gamma = \hat{\gamma} \tilde{U}^*,\,\, \theta_A U=(I \otimes \tilde{U}) \theta_{\hat{A}}
\end{equation}
								\end{proposition}
								\begin{proof}
									Suppose $\theta$ and $\hat{\theta}$ are equivalent. By Corollary 3.11 of \cite{DEY minimal}, we know that $\theta$ and $\hat{\theta}$ are equivalent to the characteristic functions of liftings $\underline{E}$ and $\underline{\hat{E}},$ respectively.  Therefore, the characteristic functions of liftings $\underline{E}$ and $\underline{\hat{E}}$ are equivalent. By Proposition 3.14 of \cite{DEY minimal} (cf. Theorem 3.7 of \cite{DEY}) it follows that there exist a unitary $u: \clh_{\hat{A}} \to \clh_A$ such that for all $i=1, \ldots, d$
									\[ \begin{bmatrix}
										C_i &0\\
										B_i & A_i \end{bmatrix} 
									\begin{bmatrix}
										I & 0 \\
										0 & u 
									\end{bmatrix}	
									= \begin{bmatrix}
										I & 0 \\
										0 & u 
									\end{bmatrix} 
									\begin{bmatrix}
										C_i &0\\
										\hat{B}_i & \hat{A}_i \end{bmatrix},\]
									i.e., $B_i=u \hat{B}_i$ and $A_iu=u \hat{A}_i.$ Since $\underline{B}= D_{*,A} \gamma^* D_C,$  $\underline{\hat{B}}= D_{*,\hat{A}} \hat{\gamma}^* D_C$ and $D_{*,A} u= u D_{*,\hat{A}},$ we have
									$\gamma = \hat{\gamma} \tilde{U}^*$ where $\tilde{U}=u |_{\cld_{*,\hat{A}}}.$ Also, it follows from \cite{POP2} that $\theta_A U=(I \otimes \tilde{U}) \theta_{\hat{A}}$ where $U=(\bigoplus^d_{i=1} u) |_{\cld_{\hat{A}}}.$
									
									Conversely, suppose there exist unitary operators $U: \cld_A \to \cld_{\hat{A}}$ and $\tilde{U}: \cld_{*,\hat{A}} \to \cld_{*,A}$ such that
									\[ \gamma = \hat{\gamma} \tilde{U}^*,\,\, \theta_A U=(I \otimes \tilde{U}) \theta_{\hat{A}}.\]
									This implies $I -\gamma \gamma ^* = I -\hat{\gamma} \hat{\gamma} ^*$ and hence $D_{*,\gamma}=D_{*,\hat{\gamma}}.$ Moreover, 
									\begin{align*} (I_{\Gamma}\otimes\gamma)\theta_A
										&= (I_{\Gamma}\otimes \hat{\gamma}\tilde{U}^* \tilde{U})\theta_{\hat{A}}U^*\\
										&= (I_{\Gamma}\otimes \hat{\gamma})\theta_{\hat{A}} U^*.
									\end{align*}
									 By Equation (\ref{decomposition eq})
			\[									 \theta \simeq \begin{bmatrix}
										D_{*,\gamma}&(I_{\Gamma}\otimes\gamma)\theta_A
									\end{bmatrix}{\sigma}, \, \,   \hat{\theta} \simeq \begin{bmatrix}
										D_{*,\hat{\gamma}}&(I_{\Gamma}\otimes \hat{\gamma})\theta_{\hat{A}}
									\end{bmatrix}{\hat{\sigma}} 
\]
									where  $\sigma:\mathcal{D} \rightarrow \mathcal{D}_{*,\gamma}\oplus\mathcal{D}_A$ and $\sigma:\mathcal{M} \rightarrow \mathcal{D}_{*,\hat{\gamma}}\oplus\mathcal{D}_{\hat{A}}$ are unitaries. Let $v=\hat{\sigma}^{-1} \begin{bmatrix} I\\ U^* \end{bmatrix} \sigma.$ Then 
$$\begin{bmatrix}
										D_{*,\gamma}&(I_{\Gamma}\otimes\gamma)\theta_A
									\end{bmatrix}{\sigma} = \begin{bmatrix}
										D_{*,\hat{\gamma}}&(I_{\Gamma}\otimes \hat{\gamma})\theta_{\hat{A}}
									\end{bmatrix}{\hat{\sigma}} v.$$ 
Hence $\theta$ and $\hat{\theta}$ are equivalent.
								\end{proof}
								
\begin{remark}
We say that the ordered pairs  $(\gamma, \theta_A)$ and $(\gamma, \theta_{\hat{A}})$ are {\it equivalent} if there exist unitaries $U$ and $ \tilde{U}$ such that the Equation (\ref{parameter}) holds. From Proposition \ref{prop_para}  it follows that for a given row contraction $\underline{C},$ the set of contractive multi-analytic operators $M_{\theta}: \Gamma \otimes \cld \to \Gamma \otimes \cld_C$  with  injective symbols are parametrized by  the ordered pairs $(\gamma, \theta_A)$ corresponding to the liftings of $\underline{C}$ associated to $M_{\theta},$ where $\gamma$ is resolving, and  $\theta_A$ is purely contractive and satisfies the Szeg\"o condition.
\end{remark}
								

								\section{Examples}
								In this section, we discuss a few examples illustrating the results of previous sections.
								\begin{example}\label{distinct}
									Let  $C=\displaystyle\frac{1}{2}$ be the contraction on $\clh_C = \mathbb{C}$.  Then $D_C = \displaystyle \frac{\sqrt{3}}{2}$ and  $\cld_C = \mathbb{C}.$ For $|\alpha| <  1$, consider the Schur function $$\theta(z)= \displaystyle \frac{z-\alpha}{1-\bar{\alpha}z}.$$   
									In section 3 of \cite{DEY minimal}, the notion of ``the lifting associated to a multi-analytic operator" was introduced. Let $$E=\begin{bmatrix}
										C&0\\B&A
									\end{bmatrix}$$ on Hilbert space $\clh_{E}=\clh_C\oplus \clh_{A}$ be a lifting of $C$ associated with $\theta$. Since $\theta$ is inner, we have $\Delta_{\theta}=(I-M_\theta^* M_\theta)^{\frac{1}{2}} =0$. Following the construction described after Corollary $3.4$  of \cite{DEY minimal}, we have $E^*=(V^C)^*|_{\clh_{E}}$ and $\clh_A= \clh^2(\mathbb{D}) \ominus \theta \clh^2(\mathbb{D}),$ where $V^C$ denote the minimal isometric dilation of $C$ and $\clh^2(\mathbb{D})$ is the space of all holomorphic functions on $\mathbb{D}$, whose Fourier coefficients are square summable. The space $\theta \clh^2(\mathbb{D})$ consists of functions vanish at $\alpha,$ i.e.,
									\[\theta \clh^2(\mathbb{D}) = \{ g \in \clh^2(\mathbb{D}): g(\alpha) =0  \}.\] 
									Thus $\clh_{A}= \overline{span} \{ k_{\alpha} \}$	where $k_{\alpha}= \displaystyle \frac{1}{1- \bar{\alpha}z}$.   
									Let $h=(1- |\alpha|^2)^\frac{1}{2}k_{\alpha}\in \clh_A.$ For $\beta, \beta' \in \mathbb{C}, f \in \clh^2(\mathbb{D}),$ we have
									$$\langle (V^C)^*(\beta \oplus h), \beta' \oplus f \rangle=\langle (\beta + h,\frac{1}{2} \beta' + \frac{\sqrt{3}}{2} \beta' \oplus z f \rangle.$$	
									Hence
									$B=\displaystyle \frac{\sqrt{3}}{2}(1- |\alpha|^2)^\frac{1}{2}$ and 
									$A= \alpha.$ Consequently, $D_{*,A} =D_A = (1-|\alpha|^2)^{\frac{1}{2}}, D_C = \displaystyle \frac{\sqrt{3}}{2}. $ Furthermore,
									\[\gamma = 1,\,\, D_{*,\gamma} = 0,\]
									and
									\[
									D_{E} =(1-\frac{1}{4}|\alpha|^2)^{-\frac{1}{2}} 
									\begin{bmatrix}
										\frac{3}{4}|\alpha|^2& -\frac{\sqrt{3}}{2}(1-|\alpha|^2)^{\frac{1}{2}}\alpha\\
										-\frac{\sqrt{3}}{2}(1-|\alpha|^2)^{\frac{1}{2}}\bar{\alpha}	& 1-|\alpha|^2
									\end{bmatrix}.\]
									From  Equation (\ref{eqn for sigma}) we obtain the map $\sigma: \cld_E \to \cld_A$ as
									\[ \sigma D_{E} = \begin{bmatrix}
										0&0\\
										-\bar{\alpha}\frac{\sqrt{3}}{2}& (1-|\alpha|^2)^{\frac{1}{2}}
									\end{bmatrix}.
									\]
									Since both $\cld_E$ and $\cld_A$ are one dimensional and satisfy $\norm{\sigma D_Eh} = \norm{D_E h},$ the map $\sigma$ is unitary from $\cld_E$ to $\cld_A$. 
									
									It is easy to check that the characteristic function $\theta_A$ of $A$ is equal to $\theta.$ The symbol $\theta_{C,E}$ of the characteristic function of the lifting $E$ of $C$ can be written as
									\[\theta_{C,E}= \begin{bmatrix}
										0 & \theta_A
									\end{bmatrix} \sigma= \displaystyle \frac{(z-\alpha)\sigma}{1-\bar{\alpha}z,}\]
									which coincides with the $\theta$. 
									
									The colligation matrix corresponding to the lifting $E$ is
									\begin{align*}
										V&=\begin{bmatrix}
											A^*& D_AP_{D_A}\sigma\\
											\gamma D_{*,A}& (D_{*,\gamma}P_{D_{*,\gamma}}\sigma-\gamma A P_{D_A}\sigma)
										\end{bmatrix}\\
										&=\begin{bmatrix}
											\bar{\alpha}& (1-|\alpha|^2)^{\frac{1}{2}}\sigma\\
											(1-|\alpha|^2)^{\frac{1}{2}}&-\alpha\sigma
										\end{bmatrix}.
									\end{align*}
									Using Equation (\ref{transfer}), the transfer function of $V$ is computed as 
									\begin{align*}
										\theta_V &= -\alpha\sigma+ (1-|\alpha|^2)^{\frac{1}{2}}(1- \bar{\alpha} z)^{-1}(z(1-|\alpha|^2)^{\frac{1}{2}}\sigma)\\
										&=-\alpha\sigma+\frac{z(1-|\alpha|^2)\sigma}{1-\bar{\alpha}z}
										=\frac{(z-\alpha)\sigma}{1-\bar{\alpha}z}.
									\end{align*}
								\end{example}
								\begin{example}\label{example2}
									Let $C = \displaystyle \frac{1}{2},\, A  = \displaystyle \frac{1}{2}$ are the operators on $\clh_C=\clh_A=\mathbb{C}$, respectively. Consider the lifting $E$ of $C$ defined by
									\[E = \begin{bmatrix}
										C&0\\B&A
									\end{bmatrix} = \frac{1}{2}\begin{bmatrix}
										1 & 0\\
										1 & 1
									\end{bmatrix}\] on $\clh_E=\mathbb{C}^2$. Then  $D_C = \displaystyle \frac{\sqrt{3}}{2}$, $D_{*, A} = \displaystyle \frac{\sqrt{3}}{2}$,  and 
									\[
									{D_E} 
									= \frac{1}{2(5+2 \sqrt{5}))^{\frac{1}{2}}}\begin{bmatrix}
										\sqrt{5}+2 & -1\\
										-1 & \sqrt{5}+3
									\end{bmatrix}.\]
									\text{From  Equation (\ref{eqn for sigma})},\begin{align*}
										\sigma D_E &= \begin{bmatrix}
											\frac{\sqrt{5}}{2\sqrt{3}}&0\\
											-\frac{1}{2\sqrt{3}}&\frac{\sqrt{3}}{2}
										\end{bmatrix},\;\text{solving for}\; \sigma\;\text{we obtain}\\
										\sigma&=\frac{1}{\sqrt{3}(5+2 \sqrt{5}))^{\frac{1}{2}}}\begin{bmatrix}
											\sqrt{5}+3&1\\
											-1&\sqrt{5}+3
										\end{bmatrix}
									\end{align*}	
									Using $B^* = {D_C}\gamma D_{*, A}$ we get that, $\gamma = \displaystyle\frac{2}{3}$ and $D_{*,\gamma} = \displaystyle \frac{\sqrt{5}}{3}$. For $h_C\in \clh_C$ the characteristic function $\theta_{C,E}$ is given by
									\begin{align*}
										\theta_{C,E}(z)(D_E)h_C 
										&= D_Ch_C- \gamma D_{*,A}Bh_C-\underset{n = 1}{\overset{\infty}{\sum}} \gamma D_{*,A}(A^*)^nBh_C z^n\\
										&= \frac{\sqrt{3}}{2}h_C- \underset{n = 0}{\overset{\infty}{\sum}}z^n\frac{1}{2\sqrt{3}}  \left(\frac{1}{2} \right)^{n}h_C\\
										&=\frac{\sqrt{3}}{2}h_C- \frac{1}{2\sqrt{3}(1-\frac{z}{2})}h_C\\
										&=\frac{4-3z}{4\sqrt{3}(1-\frac{z}{2})}h_C,
									\end{align*}
									and for $h_A\in \clh_A$, we have
									\begin{align*}
										\theta_{C,E}(z)(D_E)h_A &=-\gamma AD_Ah_A+z\underset{n = 0}{\overset{\infty}{\sum}} \gamma D_{*,A}(A^*)^nP_jD_A^2h_A z^n\\
										&=- \frac{1}{2\sqrt{3}}h_A + z\underset{n = 0}{\overset{\infty}{\sum}} z^n  \left(\frac{\sqrt{3}}{4} \right) \left(\frac{1}{2} \right)^{n} h_A\\
										&=-\frac{1}{2\sqrt{3}}h_A+\frac{\sqrt{3}z}{4(1-\frac{z}{2})}h_A\\
										&= \frac{(z-\frac{1}{2})}{\sqrt{3}(1-\frac{z}{2})}h_A.
									\end{align*}
									Since $A=\displaystyle \frac{1}{2}$, the characteristic function $\theta_A(z)$ is equal to $\displaystyle \frac{z-\frac{1}{2}}{1-\frac{z}{2}}$ and
									\[
									\begin{bmatrix}
										D_{*,\gamma} & \gamma\theta_A
									\end{bmatrix}\sigma D_E= \begin{bmatrix}
										\frac{\sqrt{5}}{3} & \frac{2(z-\frac{1}{2})}{{3(1-\frac{z}{2})}}
									\end{bmatrix}\begin{bmatrix}
										\frac{\sqrt{5}}{2\sqrt{3}}&0\\
										-\frac{1}{2\sqrt{3}}&\frac{\sqrt{3}}{2}
									\end{bmatrix}=\begin{bmatrix}
										\frac{4-3z}{4\sqrt{3}(1-\frac{z}{2})}&
										\frac{(z-\frac{1}{2})}{\sqrt{3}(1-\frac{z}{2})}
									\end{bmatrix}=\theta_{C,E}(z) D_E .
									\]	
									Also,
									\[
									\theta_{C,E}(z) = 	\begin{bmatrix}
										D_{*,\gamma} & \gamma\theta_A
									\end{bmatrix}\sigma = \begin{bmatrix}
										\frac{2(\sqrt{5}+2) - z(\sqrt{5}+3)}{2(1-\frac{z}{2})}&\frac{(\sqrt{5}+4)z-2}{2(1-\frac{z}{2})}
									\end{bmatrix}.
									\]
									The associated colligation $V:\clh_A\oplus\cld_E\rightarrow \clh_A\oplus\cld_C$ is
									\[
									V= \begin{bmatrix}
										A^*&D_AP_{D_A}\sigma\\
										\gamma D_{*,A}& D_{*,\gamma}P_{D_{*,\gamma}}\sigma -\gamma AP_{D_A}\sigma
									\end{bmatrix}=\begin{bmatrix}
										A^*&0&D_A\\
										\gamma D_{*,A}& D_{*,\gamma}&-\gamma A
									\end{bmatrix}\begin{bmatrix}
										I&0\\
										0&\sigma 
									\end{bmatrix}\]
									After simplification, we get
									\[								V=\begin{bmatrix}
										\frac{1}{2}&-\frac{1}{2(5+2 \sqrt{5})^{\frac{1}{2}}}&\frac{\sqrt{5}+3}{2(5+2 \sqrt{5})^{\frac{1}{2}}}\\
										\frac{1}{\sqrt{3}}&\frac{(\sqrt{5}+2)}{\sqrt{3}(5+2 \sqrt{5})^{\frac{1}{2}}}&-\frac{1}{\sqrt{3}(5+2 \sqrt{5})^{\frac{1}{2}}}
									\end{bmatrix}
									\]
									Similar to Example(\ref{example2}) comparing with standard form of colligation we obtain,
									$A=\displaystyle \frac{1}{2}$, $B = \begin{bmatrix}
										-\frac{1}{2(5+2 \sqrt{5})^{\frac{1}{2}}}&\frac{\sqrt{5}+3}{2(5+2 \sqrt{5})^{\frac{1}{2}}}
									\end{bmatrix}$, $C = \frac{1}{\sqrt{3}}$, $D = \begin{bmatrix}
										\frac{(\sqrt{5}+2)}{\sqrt{3}(5+2 \sqrt{5})^{\frac{1}{2}}}&-\frac{1}{\sqrt{3}(5+2 \sqrt{5})^{\frac{1}{2}}}
									\end{bmatrix}.$ Now, \begin{align*}
										\theta_V(z)  = \begin{bmatrix}
											\frac{(\sqrt{5}+2)}{\sqrt{3}(5+2 \sqrt{5})^{\frac{1}{2}}}&-\frac{1}{\sqrt{3}(5+2 \sqrt{5})^{\frac{1}{2}}}
										\end{bmatrix}+\frac{1}{\sqrt{3}}(1-\frac{z}{2})^{-1}z\begin{bmatrix}
											-\frac{1}{2(5+2 \sqrt{5})^{\frac{1}{2}}}&\frac{\sqrt{5}+3}{2(5+2 \sqrt{5})^{\frac{1}{2}}}
										\end{bmatrix}.
									\end{align*}
									After simplification, we get
									\[
									\theta_V(z) = \begin{bmatrix}
										\frac{2(\sqrt{5}+2) - z(\sqrt{5}+3)}{2(1-\frac{z}{2})}&\frac{(\sqrt{5}+4)z-2}{2(1-\frac{z}{2})}
									\end{bmatrix} = \theta_{C,E}(z).
									\]
								\end{example}
								\begin{example}
									Let $\theta(z) = \displaystyle \frac{z}{2}$ and $C=\displaystyle \frac{1}{2}.$ So $\Delta_{\theta} = \displaystyle \frac{\sqrt{3}}{2}$. Define the isometry $W:\clh^2(\mathbb{D})\rightarrow \clh^2(\mathbb{D})\oplus\clh^2(\mathbb{D})$ by $$Wf= M_{\theta}f\oplus\Delta_{\theta}f.$$  Define the subspace $\clh_{A} = (\clh^2(\mathbb{D})\oplus \clh^2(\mathbb{D}))\ominus W\clh^2(\mathbb{D}) = \{g\oplus-\frac{M^*_zg}{\sqrt{3}}:g\in\clh^2(\mathbb{D})\}.$ By setting $$W|_{\clh_C} =I_{\clh_C}$$ we can extend $W$ as a unitary operator $W:\clh_C\oplus \clh_A\oplus\clh^2(\mathbb{D})\rightarrow \clh_C\oplus\clh^2(\mathbb{D})\oplus\clh^2(\mathbb{D}).$   Clearly $\overline{\Delta_{\theta}\clh^2(\mathbb{D})} = \clh^2(\mathbb{D}).$ Define $Y=M_z$ on $\clh^2(\mathbb{D}).$ Then $Y\Delta_{\theta}f=\Delta_{\theta}M_zf.$  Now define 
$$\tilde{V} := V^C\oplus Y:\hat{\clh}_C\oplus\clh^2(\mathbb{D})\rightarrow\hat{\clh}_C\oplus\clh^2(\mathbb{D})$$
where $V^C$  is the minimal isometric dilation of $C$ on the Hilbert space $\hat{\clh}_C$ (cf. Section 6 of \cite{DEY})  
Let $E^*=\tilde{V}^*|_{\clh_C\oplus \clh_A}.$ We observe that $E$ is a lifting of $C$.	Let $\alpha \in \clh_C$ and $g(z)=\sum_{n=0}^\infty g_n z^n \in \mathcal{H}^2.$ 
									Define	$B^*(g\oplus(-\frac{M^*_zg}{\sqrt{3}})) = \frac{\sqrt{3}g_0}{2}$ and $A^*(g\oplus(-\frac{M^*_zg}{\sqrt{3}}))=M^*_zg\oplus (-\frac{M^*_z(M^*_zg)}{\sqrt{3}}).$ Then
									\begin{align*}
										B\alpha &= \frac{\sqrt{3}}{2}\alpha\oplus 0,\\
										A(g\oplus(-\frac{M^*_zg}{\sqrt{3}})) &= g_A\oplus(-\frac{M^*_z g_{A}}{\sqrt{3}}),
									\end{align*}
									where $g_A = \frac{3}{4}g_0z+\sum_{n=2}^{\infty}g_{n-1}z^n.$ It follows that
									\begin{align*}
										D_{*,A}(g\oplus(-\frac{M^*_zg}{\sqrt{3}}))&=g_0+\frac{g_1}{2}z\oplus(-\frac{M^*_z(g_0+\frac{g_1}{2}z)}{\sqrt{3}})\\
										D_A(g\oplus(-\frac{M^*_zg}{\sqrt{3}}))&=\frac{g_0}{2}\oplus 0.
									\end{align*}
									It is easy to check that	$B=D_{*,A}\gamma^*D_C$ where $\gamma:\cld_{*,A}\rightarrow\cld_C$ is a co-isometric contraction defined by $\gamma D_{*,A}(g\oplus(-\frac{M^*_zg}{\sqrt{3}})) = g_0.$ Hence
									\[
									E=\begin{bmatrix}
										C&0\\
										B&A
									\end{bmatrix}
									\]	is a lifting of $C$. For $\alpha\in \clh_C$, $D_E\alpha = 0$, and 
									\[
									\cld_E = \overline{span}\{D_E(g\oplus(-\frac{M^*_zg}{\sqrt{3}})):g\oplus(-\frac{M^*_zg}{\sqrt{3}})\in \clh_A\}.
									\]
									Since $D_E(g\oplus(-\frac{M^*_zg}{\sqrt{3}})) = D_A(g\oplus(-\frac{M^*_zg}{\sqrt{3}})) = \frac{g_0}{2},$ we obtain 
									\begin{align*}
										\sigma D_E = \begin{bmatrix}
											D_{*,\gamma}D_C&0\\
											-A^*\gamma^*D_C&D_A
										\end{bmatrix}=\begin{bmatrix}
											0&0\\-A^*\gamma^*D_C&D_A
										\end{bmatrix}.
									\end{align*}
									Thus,  $\sigma D_E(g\oplus(-\frac{M^*_zg}{\sqrt{3}})) = D_A(g\oplus(-\frac{M^*_zg}{\sqrt{3}})) = \frac{g_0}{2}$. Therefore, $\sigma = I$ is the unitary operator from $\cld_E$ to $\cld_A$. For $\alpha \in \clh_C$
									\begin{align*}
										\theta_{C,E}(z)D_E\alpha=D_C\alpha-\gamma D_{*,A}B\alpha -\underset{n = 1}{\overset{\infty}{\sum}} D_{*,A}{A^*}^nB\alpha z^n = 0,\end{align*}
									and for $h_a=g\oplus(-\frac{M^*_zg}{\sqrt{3}})\in \clh_A$
									\begin{align*}
										\theta_{C,E}(z)D_Eh_a=-\gamma AD_Ah_a+ \underset{n = 1}{\overset{\infty}{\sum}}\gamma D_{*,A}A_{n}^*P_jD_A^2h_a z^n=\frac{g_0}{4}z.
									\end{align*}
									So,
									\begin{align*}
										\theta_{C,E}(z) &= \begin{bmatrix}
											0&\frac{g_0}{4}z
										\end{bmatrix}\\
										&=\begin{bmatrix}
											D_{*,\gamma}&\gamma\theta_A(z)
										\end{bmatrix}\sigma
									\end{align*}
									The associated colligation is
									\begin{align*}
										V&= \begin{bmatrix}
											A^*&D_AP_{D_A}\sigma\\
											\gamma D_{*,A}& D_{*,\gamma}P_{D_{*,\gamma}}\sigma-\gamma AP_{D_A}\sigma
										\end{bmatrix}\\
										&= \begin{bmatrix}
											A^*&D_A\sigma\\
											\gamma D_{*,A}& \gamma A\sigma
										\end{bmatrix}.
									\end{align*} For $h(z)=\sum_{n=0}^\infty h_n z^n$ and $\tilde{h}=h \oplus (-\frac{M^*_z h}{\sqrt{3}})$
									\begin{align*}
										V\begin{pmatrix}
											g\oplus(-\frac{M^*_zg}{\sqrt{3}})\\
											D_A(\tilde{h})
										\end{pmatrix}  &= \begin{bmatrix}
											A^*&D_A\sigma\\
											\gamma D_{*,A}& \gamma A\sigma
										\end{bmatrix}\begin{pmatrix}
											g\oplus(-\frac{M^*_zg}{\sqrt{3}})\\
											D_A(\tilde{h})
										\end{pmatrix} \\&=\begin{pmatrix}
											M^*_zg\oplus (-\frac{M^*_z(M^*_zg)}{\sqrt{3}})+\frac{h_0}{4}\oplus 0\\
											g_0\end{pmatrix}.
									\end{align*}
								\end{example}

								\section{Colligations with certain class of basic operators}
								
								In this section, we describe the structure of a co-isometric observable colligation  having finite dimensional input space where the dimension of the input space is equal to the dimension of the defect space of the basic operator of the colligation. We begin with a simple application of linear algebra which we would need in the next theorem.
								\begin{lemma}\label{defect dimension lemma}
									Let $\underline{T} = (T_1,T_2,\ldots,T_d)$ be a row contraction on a Hilbert space $\clh$ with $dim(\clh)=n<\infty$. Then
									\[
									n(d-1)\leq dim(\cld_T)\leq nd.
									\]
									Moreover, on a Hilbert space $\clh$ of dimension n, for any integer $k$ such that $n(d-1) \leq k \leq nd,$ there exists a row contraction $\underline{G}=(G_1, \ldots, G_d)$ on $\clh$ with $dim(\cld_{G}) = k.$   
								\end{lemma}
									\begin{proof}
										Let $\underline{T} = (T_1,T_2,\ldots,T_d)$ be a row contraction defined on $\clh$ of dimension $n.$ That is, $\underline{T}=[T_1,T_2,\ldots,T_d]$ is a row matrix with domain $\overset{d}{\underset{1}{\oplus}}\clh$ and range $\clh.$ So, $ran(\underline{T})\leq n.$ By rank-nullity theorem, $ker(\underline{T})\geq nd-n.$
										Since \[
										ran(\underline{T}^*\underline{T}) = ran(\underline{T}^*)=ker(\underline{T})^{\perp},
										\]
										$ran(\underline{T}^*\underline{T})\leq nd-(nd-n)=n.$ Thus, $$nd \geq  ran(I-\underline{T}^*\underline{T})\geq nd-n = n(d-1).$$
Since $I-\underline{T}^*\underline{T}$ is a positive matrix, it follows immediately from spectral theorem that $nd \geq  dim(\cld_T) \geq nd-n = n(d-1).$ This proves the first assertion.
Next, we assume $k\in\mathbb{N}$ satisfies $$n(d-1)\leq k\leq nd.$$ Let $r=nd-k.$ Then $0\leq r\leq n.$ Let $\clh$ be a Hilbert space with dimension $n.$ Define the row contraction $\underline{G}=(G_1, \ldots, G_d)$ on $\clh$ with $$G_1  = \begin{bmatrix}
											I_{r\times r}&0\\
											0&0
										\end{bmatrix}\:\: \text{and}\: G_j=0\: \text{for}\: 1\leq j\leq d.$$  Then $ran(G^*G) = r$ and hence $dim(\cld_G)=ran(I-G^*G) = nd-r = k.$  The lemma follows.
								\end{proof}
								In the following theorem we characterize (cf. \cite[Theorem 2.2]{Arlinski}) certain colligation matrices  using the theory of characteristic function of lifting developed in Section  \ref{lrc} and in \cite{DEY minimal}:
								\begin{theorem}\label{structure of colligation}
									Let $\underline{T} = (T_1,T_2,\ldots,T_d)$ be a row contraction on a Hilbert space $\clh_{T}$ with finite dimensional defect space $\cld_T$. Let $$W=\begin{bmatrix}
										\underline{T}^*&F\\G&H
									\end{bmatrix}=\begin{bmatrix}
										T^*_1&F_1\\
										T^*_2&F_2\\
										\vdots&\vdots\\
										T^*_d&F_d\\
										G&H
									\end{bmatrix}:\clh_{T}\oplus \clu\rightarrow \clh^d_{T}\oplus \cly$$ be a co-isometric observable colligation with basic operator $\underline{T}^*$.  If the input space $\clu$ be such that $dim(\clu) = dim(\cld_T)$, then $W$ takes  the form 
									\[
									W = \begin{bmatrix}
										I&0\\0&\gamma'
									\end{bmatrix}\begin{bmatrix}
										\underline{T}^*&D_T\tilde{\sigma}\\D_{*,T}&-\underline{T} \tilde{\sigma}
									\end{bmatrix},
									\]
									where $\gamma'$ is a co-isometric contraction from $\cld_{*,T}$ to $\cly$  and $\tilde{\sigma}$ is a unitary operator from $\clu$ to $\cld_{T}$.
								\end{theorem}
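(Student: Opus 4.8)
The plan is to read the four block relations encoded in the hypothesis $WW^*=I_{H_A^d\oplus\cly}$, to factor the input column $B$ through $D_A$, and then to compare $W$ with the (manifestly unitary) colligation assembled from $\underline{A}$ and this factorization, extracting $\gamma$ as the operator relating the two output rows. Writing $W=\begin{bmatrix}\underline{A}^*&B\\ C&D\end{bmatrix}$ and expanding $WW^*=I$, I would record
\[\underline{A}^*\underline{A}+BB^*=I_{H_A^d},\qquad \underline{A}^*C^*+BD^*=0,\qquad CC^*+DD^*=I_{\cly},\]
together with the adjoint of the middle relation. The first identity gives at once $BB^*=I-\underline{A}^*\underline{A}=D_A^2$.

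\emph{Step 1 (factoring $B$).} Since $\cld_A$ is finite dimensional with $\dim\clu=\dim\cld_A$, from $BB^*=D_A^2$ I would note $\operatorname{rank}B=\operatorname{rank}(BB^*)=\operatorname{rank}D_A^2=\dim\cld_A=\dim\clu$, so $B:\clu\to H_A^d$ is injective with $\operatorname{ran}B=\overline{\operatorname{ran}}\,D_A=\cld_A$; in particular $D_A|_{\cld_A}$ is an invertible positive operator on $\cld_A$. Setting $\tilde\sigma:=(D_A|_{\cld_A})^{-1}B:\clu\to\cld_A$ yields $B=D_A\tilde\sigma$, and a direct computation gives $\tilde\sigma\tilde\sigma^*=(D_A|_{\cld_A})^{-1}(BB^*)(D_A|_{\cld_A})^{-1}=(D_A|_{\cld_A})^{-1}D_A^2(D_A|_{\cld_A})^{-1}=I_{\cld_A}$, so $\tilde\sigma$ is a co-isometry between finite-dimensional spaces of equal dimension, hence a unitary. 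The finiteness of $\cld_A$ is essential exactly here.

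\emph{Step 2 (extracting $\gamma$).} With $\tilde\sigma$ fixed I would introduce the comparison colligation
\[\Sigma=\begin{bmatrix}\underline{A}^*&D_A\tilde\sigma\\ D_{*,A}&-\underline{A}\tilde\sigma\end{bmatrix}:H_A\oplus\clu\to H_A^d\oplus\cld_{*,A},\]
and verify that $\Sigma$ is \emph{unitary}. Using $\tilde\sigma\tilde\sigma^*=I_{\cld_A}$, $\tilde\sigma^*\tilde\sigma=I_{\clu}$, the defect identities $D_A^2+\underline{A}^*\underline{A}=I$ and $D_{*,A}^2+\underline{A}\,\underline{A}^*=I$, and the intertwiners $\underline{A}D_A=D_{*,A}\underline{A}$, $D_A\underline{A}^*=\underline{A}^*D_{*,A}$, one checks both $\Sigma\Sigma^*=I$ and $\Sigma^*\Sigma=I$. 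Write the common first row $R=\begin{bmatrix}\underline{A}^*&D_A\tilde\sigma\end{bmatrix}$, the second row of $W$ as $S=\begin{bmatrix}C&D\end{bmatrix}$, the second row of $\Sigma$ as $T=\begin{bmatrix}D_{*,A}&-\underline{A}\tilde\sigma\end{bmatrix}$, and put $\clk:=\ker R$. The first relation above says $RR^*=I$, so $R$ is a co-isometry and $R^*R=P_{\clk^\perp}$; unitarity of $\Sigma$ gives $R^*R+T^*T=I$, whence $T^*T=P_{\clk}$. The middle relation says $RS^*=0$, i.e. $\operatorname{ran}S^*\subseteq\ker R=\clk$, so $S=SP_{\clk}$. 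Defining $\gamma:=ST^*:\cld_{*,A}\to\cly$ then gives $\gamma T=ST^*T=SP_{\clk}=S$, that is $C=\gamma D_{*,A}$ and $D=-\gamma\underline{A}\tilde\sigma$, which is precisely the asserted factorization $W=\begin{bmatrix}I&0\\0&\gamma\end{bmatrix}\Sigma$. Finally $\gamma\gamma^*=ST^*TS^*=SP_{\clk}S^*=SS^*=I_{\cly}$ by the third relation, so $\gamma$ is a co-isometric contraction of $\cld_{*,A}$ onto $\cly$.

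I expect the decisive point to be the verification that $\Sigma$ is unitary and not merely co-isometric (the content of Remark~\ref{remark 3.1}): the entire extraction of $\gamma$ hinges on $T^*T=P_{\clk}$ exhausting all of $\ker R$, which fails for a colligation that is only co-isometric. The dimension equality $\dim\clu=\dim\cld_A<\infty$ powers Step 1 and is indispensable there, while the hypothesis on $\cly$ and the observability of $W$ serve as consistency conditions ensuring the class is non-vacuous; once a co-isometric $W$ of the stated shape is given, the co-isometry property of $\gamma$ produced above already forces $\dim\cly\le\dim\cld_{*,A}$.
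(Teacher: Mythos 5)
Your proof is correct, but it follows a genuinely different route from the paper's. The paper's argument, after establishing the same factorization $B=D_A K^*$ with $K$ unitary (your Step 1), is model-theoretic: it shows the transfer function $\theta$ of $W$ is injective (this is where observability of $W$ enters), invokes Lemma \ref{defect dimension lemma} together with the dimension hypothesis on $\cly$ to manufacture an auxiliary row contraction $\underline{C}'$ with $\dim(\cld_{C'})=\dim(\cly)$, realizes $\theta$ as the characteristic function of a minimal contractive lifting of $\underline{C}'$ via the construction of \cite{DEY minimal}, obtains from Lemma \ref{lemma characteristic fun of colligation} the canonical co-isometric observable colligation $V$ with the same transfer function, and finally applies Ball's uniqueness theorem (Lemma \ref{lemma ball}) to conclude $W$ and $V$ are unitarily equivalent, reading off the stated form from that equivalence. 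Your argument instead stays entirely at the level of the block identities: you factor $B$ through $D_A$ using finite-dimensionality, observe that the comparison colligation $\Sigma$ (the Julia operator of $\underline{A}^*$ composed with $I\oplus\tilde\sigma$) is unitary and not just co-isometric, and then extract $\gamma=ST^*$ by pure projection algebra ($T^*T=P_{\ker R}$, $S=SP_{\ker R}$), verifying $\gamma\gamma^*=I_{\cly}$ directly. What your approach buys is significant: it is shorter, self-contained, and shows that neither the observability of $W$ nor the hypothesis $n(d-1)\leq\dim\cly\leq nd$ is actually needed for the conclusion --- both are used only to power the paper's detour through liftings and Lemma \ref{lemma ball} (one can check on small examples, e.g.\ $d=1$, $A=\mathrm{diag}(0,1)$ on $\mathbb{C}^2$, that non-observable co-isometric colligations still have the stated form). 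What the paper's approach buys is thematic coherence and extra structural information: it exhibits $W$ as unitarily equivalent to the colligation of a concrete minimal lifting and shows along the way that the associated contraction $\gamma'$ has vanishing defect $\cld_{*,\gamma'}=\{0\}$, tying the structure theorem to the characteristic-function machinery that is the subject of the paper.
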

								\begin{proof}
									Since $W$ is co-isometric colligation, we have $WW^*=I$. This implies
									$$\begin{bmatrix}
										T^*_1&F_1\\
										T^*_2&F_2\\
										\vdots&\vdots\\
										T^*_d&F_d\\G&H
									\end{bmatrix}
									\begin{bmatrix}
										T_1&T_2&\ldots& T_d&G^*\\
										F^*_1&F^*_2&\ldots&F^*_d&H^*
									\end{bmatrix} = I.
									$$  From this we get
									$\norm{F^*h}^2 = \norm{D_T h}^2$ for $h=(h_1,h_2,\ldots,h_d)\in \clh_{T}^d$. Then $	F^* = KD_T$   for some isometry $K:\cld_{T}\rightarrow\clu.$ 
									Since $\dim(\cld_{T}) = \dim(\clu)$ and are finite, it follows that $K$ is a unitary operator from $\cld_{T}$ to $\clu$. Since $D_T|_{\cld_{T}}$ is injective,  $F=D_TK^*$ is also injective.	
									\par Now, we claim that the transfer function $\Theta=H+G(I_{\Gamma\otimes \clh_T}-R\otimes \underline{T}^*)^{-1}(R\otimes F)$ is injective.
									Let  $\Theta(u)= [H+G(I-R\otimes \underline{T}^*)^{-1}(R\otimes F)](u)=0$ for some $u\in\clu.$ Then,
									\begin{align*}
										H(u) = 0\;& \text{ and }\; G(I-R\otimes \underline{T}^*)^{-1}(R\otimes F)(u)=0.
									\end{align*}
									The colligation $W$ is observable and $F:\clu\rightarrow \clh^d_T$ is injective implies  that  the operator $G(I-R\otimes \underline{T}^*)^{-1}(R\otimes F)$ is injective on $\clu$. Thus,  $u=0$ and hence the claim follows.
									
									If $\cly$ is finite dimensional and satisfy $n(d-1)\leq dim{\cly}\leq nd$  for some $n\in\mathbb{N},$ by Lemma \ref{defect dimension lemma} there exists a row contraction $\underline{C} = (C_1,C_2,\ldots,C_d)$  on $\cly$ with $\dim(\cld_{C}) = \dim(\cly)$. In particular, we can define row contraction $\underline{C}$ such that $\cld_{C} = \cly.$
									If $\cly$ is infinite dimensional space then define  $\underline{C} = (\frac{1}{2}I,0,\ldots,0)$ on $\cly.$ In both cases, we obtain a row contraction $\underline{C}$ with $\cld_{C} = \cly.$

									Then, $\Theta$ is multi-analytic operator with injective symbol from $\Gamma\otimes\clu$ to $\Gamma\otimes\cld_{C}.$
									From the construction given in \cite{DEY minimal},  it follows that there exist a minimal  contractive lifting of $\underline{C}$ $$\underline{E} = \begin{bmatrix}
										\underline{C}&0\\
										\underline{B}&\underline{A}
									\end{bmatrix}$$ on $\clh_{E} = \clh_C\oplus\clh_{A}$ with $\dim(\cld_{E}) = \dim(\clu)$ and the  characteristic function $M_{C,E}=\Theta$.
									
									By Lemma \ref{lemma characteristic fun of colligation}, corresponding to the minimal contractive lifting $\underline{E},$ there exist  a co-isometric observable colligation $V,$ 
									\[
									V= \begin{bmatrix}
										\underline{A}^*& D_{A}P_{D_A}\sigma\\
										\gamma D_{*,A}&(D_{*,\gamma}P_{D_{*,\gamma}}\sigma-\gamma \underline{A} P_{D_A}\sigma)
									\end{bmatrix}
									\] 
									such that the transfer function $\Theta_V= M_{C,E} = \Theta.$
									
									\par Observe that  $V$ and $W$ are two co-isometric observable colligations with same transfer function. Thus  there exist a unitary $U:\clh_{T}\rightarrow \clh_{A}$ satisfying,
									\begin{equation}
										UT^*_i = A^*_iU \text{ for }i = 1,2,\ldots,d.
									\end{equation}
									From this we obain the following relations,
									\begin{align}\label{eqn 5.1}
										D_T&= (\underset{i=1}{\overset{d}{\oplus}}U^*) D_{A} (\underset{i=1}{\overset{d}{\oplus}}U)\\\label{equivalence of D_*T and D_*A}  D_{*,T}& = U^*D_{*,A}U.
									\end{align}
									This implies $\dim(\cld_{A}) = \dim(\cld_{T})=\dim(\clu)$.  Since $M_{C,E} = \Theta$, 	 we get $\cld_{A} = \cld_{E}$. Given that $\sigma$ is a unitary operator from $\cld_{E}$ to $\cld_{A}\oplus\cld_{*,\gamma}$, we conclude that
									$\cld_{*,\gamma} = \{0\}$, so $\gamma$ is a co-isometry, and
									\[
									V=\begin{bmatrix}
										\underline{A}^*&D_{A}\sigma\\
										\gamma D_{*,A}&-\gamma\underline{A}\sigma
									\end{bmatrix}.
									\]
									By unitarily equivalence between $V$ and $W$, and by Equations (\ref{eqn 5.1}) and (\ref{equivalence of D_*T and D_*A}), we have
									\begin{enumerate}
										\item $\left(\underset{i=1}{\overset{d}{\oplus}}U\right) F = D_{A}\sigma$, equivalently $F=\left(\underset{i=1}{\overset{d}{\oplus}}U^*\right)D_{A}\sigma=D_T\left(\underset{i=1}{\overset{d}{\oplus}}U^*\right)\sigma,$	
										\item $G= \gamma D_{*,A}U$, equivalently $G= \gamma UD_{*,T},$
										\item $ H=\gamma\underline{A}\sigma=\gamma U\underline{T}\left(\underset{i=1}{\overset{d}{\oplus}}U^*\right)\sigma.$
										
									\end{enumerate}
									Therefore
									\[
									W=\begin{bmatrix}
										\underline{T}^*&F\\G&H
									\end{bmatrix} = \begin{bmatrix}
										\underline{T}^*&D_T\left(\underset{i=1}{\overset{d}{\oplus}}U^*\right)\sigma\\\gamma UD_{*,T}&-\gamma U \underline{T} \left(\underset{i=1}{\overset{d}{\oplus}}U^*\right)\sigma
									\end{bmatrix}.
									\]
									Define $\gamma' = \gamma U$ and $\tilde{\sigma} = \left(\underset{i=1}{\overset{d}{\oplus}}U^*\right)\sigma$. Since $U$ is unitary and $\gamma$ is a co-isometry, it follows that $\gamma'$ is a co-isometry from $\cld_{*,T}$ to $\cly$ and $\tilde{\sigma}$ is a unitary operator from $\clu$ to $\cld_{T}$. Hence we conclude that
									\[
									W=  \begin{bmatrix}
										\underline{T}^*&D_T\left(\underset{i=1}{\overset{d}{\oplus}}U^*\right)\sigma\\\gamma' D_{*,T}&-\gamma' \underline{T} \left(\underset{i=1}{\overset{d}{\oplus}}U^*\right)\sigma
									\end{bmatrix}=\begin{bmatrix}
										I&0\\0&\gamma'\end{bmatrix}\begin{bmatrix}
										\underline{T}^*&D_T\tilde{\sigma}\\D_{*,T}&- \underline{T} \tilde{\sigma}
									\end{bmatrix}.
									\]
									This completes the proof.
								\end{proof} 
Next, we show that the representation of the colligation operator obtained in Theorem \ref{structure of colligation} is unique upto certain unitaries and partial isometries.
								\begin{corollary}  Let $W=\begin{bmatrix}
										\underline{T}^*&F\\G&H
									\end{bmatrix}: \clh_{T}\oplus \clu\rightarrow \clh^d_{T}\oplus \cly$ be a co-isometric observable colligation. Assume the $dim(\cld_{T})=dim(\clu)$ is finite.  Then by Theorem \ref{structure of colligation}, $$W=\begin{bmatrix}
										\underline{T}^*&D_T\sigma\\
										\gamma D_{*,T}&-\gamma\underline{T}\sigma
									\end{bmatrix}$$  for some unitary $\sigma\in\clb(\clu,\cld_{T})$   and for some co-isometric contraction $\gamma\in\clb(\cld_{*,T},\cly).$ 
									Let $\tilde{W}$ be another co-isometric observable  colligation with the same basic operator $\underline{T}^*$, input space $\clu$ and output space $\cly$. Then $\tilde{W}$ has the following representation:
									\[ 
									\tilde{W} = \begin{bmatrix}
										\underline{T}^*& D_T \sigma S\\\gamma\nu D_{*,T}&-\gamma\nu \underline{T}\sigma S
									\end{bmatrix}
									\] 
									where $S$ is a unitary operator on $\clu$, $\nu$ is a partial isometry on $\cld_{*,T}$ and $\gamma$ is a co-isometric contraction from $\cld_{*,T}$ to $\cly$.
								\end{corollary}
								\begin{proof} 
									Let $\tilde{W}=\begin{bmatrix}
										\underline{T}^*&\tilde{F}\\\tilde{G}&\tilde{H}
									\end{bmatrix}$ for some contractions $\tilde{F}\in \mathcal{B}(\clu,\clh_T)$, $\tilde{G}\in \mathcal{B}(\clh_T,\cly)$ and $\tilde{H}\in \mathcal{B}(\clu,\cly)$ . By Theorem \ref{structure of colligation}, we know that
									\begin{align*}
										W&=\begin{bmatrix}
											\underline{T}^*&D_T\sigma\\
											\gamma D_{*,T}&-\gamma\underline{T}\sigma
										\end{bmatrix}\\
										\tilde{W}&=\begin{bmatrix}
											\underline{T}^*&D_T\tilde{\sigma}\\
											\tilde{\gamma}D_{*,T}&-\tilde{\gamma}\underline{T}\tilde{\sigma}
										\end{bmatrix}
									\end{align*}
									where  $\gamma, \tilde{\gamma}$  are co-isometric contractions from $\cld_{*,T}$ to $\cly$ and  $\sigma, \tilde{\sigma}$ are unitary operators from $\clu$ to $\cld_{T}.$
									Define $S:= \sigma^*\tilde{\sigma}$,  then $S$ is a unitary on $\clu$ and $\tilde{F} = D_T\tilde{\sigma} =  D_{T}\sigma S $. Since $\gamma$ and $\tilde{\gamma}$ are two co-isometric operators from $\cld_{*,T}$ to $\cly$, there exist an isometry $\mu$ from Range$(\gamma^*)$ to Range$(\tilde{\gamma}^*)$ such that
									$\tilde{\gamma}^* = \mu\gamma^*$. Define $$\nu D_{*,T}h := \begin{cases}
										\mu D_{*,T}h&\text{if} \;D_{*,T}h\in Range(\gamma^*)\\
										0& \text{otherwise} 
									\end{cases}.$$ Then $\nu$ extends to a partial isometry on $\cld_{*,T}$ and $\tilde{\gamma} = \gamma\nu^*.$ Thus,
									\[
									\tilde{W} =\begin{bmatrix}
										\underline{T}^*&D_T\sigma S\\
										\gamma\nu^* D_{*,T}&-\gamma\nu^*\underline{T}\sigma S
									\end{bmatrix},
									\]
									as required.
								\end{proof}

								\section*{Acknowledgment}	The first author is supported by Faculty Research Scheme, IIT (ISM), Dhanbad, grant number FRS(218)/2024-2025/M$\&$C. The second author was supported by SERB MATRICS Grant number MTR/2018/000343. The research of the third author is supported in part by grants from the Indian Institute of Technology Goa (SEED Grant 2022/SG/KH/047) and the Anusandhan National Research Foundation (MATRICS Grant MTR/2022/000339).

								\vspace{1cm}
								
								\noindent Department of Mathematics and Computing, Indian Institute of Technology (ISM), Dhanbad, Jharkhand-826004, India\\
								Email address:  neerusingh41@gmail.com, neerubala@iitism.ac.in

								\vspace{.5cm}
								\noindent Department of Mathematics, Indian Institute of Technology Bombay, Powai, Mumbai-400076, India\\
								Email address: santanudey@iitb.ac.in

								\vspace{.5cm}
								\noindent School of Mathematics \& Computer Science,
Indian Institute of Technology Goa,  \\ 
At Goa College of Engineering Campus, Farmagudi, Ponda, Goa-403401, India\\
								Email address: kalpesh@iitgoa.ac.in

								\vspace{.5cm}
								
								\noindent Institut de Mathématiques de Toulouse,	Université de Toulouse,
								118, route de Narbonne, F-31062 Toulouse Cedex 9, France
								\\
								Email address:  rmazhava@math.univ-toulouse.fr

							\end{document}